\renewcommand{\theequation}{\arabic{section}.\arabic{equation}}
\def\vbar{\mathchoice{\vrule height6.3ptdepth-.5ptwidth.8pt\kern- .8pt}
{\vrule height6.3ptdepth-.5ptwidth.8pt\kern-.8pt} {\vrule
height4.1ptdepth-.35ptwidth.6pt\kern-.6pt} {\vrule
height3.1ptdepth-.25ptwidth.5pt\kern-.5pt}}
\newcommand{\Hom}{\rm Hom}
\newcommand{\Chom}{\rm Chom}
\newcommand{\C}{{\rm C}}
\newcommand{\QC}{{\rm QC}}
\newcommand{\GDer}{{\rm GDer}}
\newcommand{\ZDer}{{\rm ZDer}}
\newcommand{\Cend}{{\rm Cend}}
\newcommand{\QDer}{{\rm QDer}}
\newcommand{\CDer}{{\rm CDer}}
\newcommand{\Der}{{\rm Der}}
\newcommand{\R}{\mathcal{R}}
\newcommand{\g}{\mathfrak{g}}
\def\<{\langle}
\def\>{\rangle}
\def\a{\alpha}
\def\b{\beta}
\def\f{\phi}
\def\p{\psi}
\def\d{\delta}
\def\g{\gamma}
\def\l{\lambda}
\def\ra{\rightarrow}
\def\si{\sigma}
\def\th{\theta}
\def\t{\tau}
\def\vp{\varphi}
\def\pr{\partial}
\def\R{\mathcal{R}}
\theoremstyle{definition}
\newtheorem{df}{Definition}[section]
\theoremstyle{plain}
\newtheorem{thm}[df]{Theorem}
\newtheorem{cor}[df]{Corollary}
\newtheorem{prop}[df]{Proposition}
\newtheorem{lem}[df]{Lemma}
\theoremstyle{definition}
  \theoremstyle{definition}
    \newtheorem{exa}[df]{Example}
\newcommand{\bmx}{\begin{pmatrix}}
\newcommand{\emx}{\end{pmatrix}}
\date{}
\begin{document}
\title{ \bf  Conformal $(\si,\t)$-Derivations on Lie conformal superalgebras }
\author{ Tianqi Feng$^{1}$, Jun Zhao$^{2}$, Liangyun Chen$^{1}$
 \date{{\small $^{1}$School of Mathematics and Statistics, Northeast Normal
 University,\\ Changchun 130024, China\\
$^{2}$School of Mathematics and Statistics, Henan University, Kaifeng 475004, China}}}
 \maketitle
\begin{abstract}
In this paper, we focus on the $(\si,\t)$-derivation theory of  Lie conformal superalgebras. Firstly, we study the fundamental properties of conformal $(\si,\t)$-derivations. Secondly, we mainly research the interiors of conformal $G$-derivations. Finally, we discuss the relationships between the conformal $(\si,\t)$-derivations and some generalized conformal derivations of Lie conformal superalgebras.
\end{abstract}

{\bf Key words}:  Lie conformal superalgebra, conformal $(\si,\t)$-derivation, conformal\\ $(\a,\b,\g)$-derivation.\\
 {\bf MSC(2010)} 17A30, 17B45, 17D25, 17B81
\renewcommand{\thefootnote}{\fnsymbol{footnote}}
\footnote[0]{ Corresponding author(L. Chen): chenly640@nenu.edu.cn.}
\footnote[0]{Supported by  NNSF of China (Nos. 11771069 and 12071405),  China Postdoctoral Science Foundation (2020M682272) and NSF of Hennan Province (212300410120).}

\tableofcontents
\numberwithin{equation}{section}
\section*{Introduction}
\def\theequation{0. \arabic{equation}}
\setcounter{equation} {0}
Lie conformal superalgebras, introduced by Kac in \cite{Kac98,Kac99}, encodes the singular part of the operator product expansion of chiral fields in two-dimensional quantum field theory. Furthermore, the category of Lie conformal superalgebras $\R$ is equivalent to the category of formal distribution Lie superalgebras $(Lie\R,\R)$, which is essentially an infinite-dimensional Lie superalgebras. Namely, they are closely connected to the notion of a formal distribution Lie superalgebra $(g,\mathcal{F})$, which means that a Lie superalgebra $g$ spanned by the coefficients of a family $\mathcal{F}$ of mutually local formal distributions. See \cite{FKR2004} in details.

The derivation theory of Lie conformal (super)algebras was introduced in \cite{DK1998,FKR2004}. Lately, the generalized derivation theory of Lie conformal (super)algebras was developed in \cite{FHS2019,ZCY2017,ZYC2018}. \cite{NH2008,ZZ2013} studied the $(\a,\b,\g)$-derivations of Lie (super)algebras. \cite{CHL2020} studied a kind of new generalized derivations of Lie algebras, that is the $(\si,\t)$-derivation theory of Lie algebras. In the present paper, we aim to do same in \cite{CHL2020,FHS2019} for Lie conformal superalgebras, extend the $(\si,\t)$-derivations of Lie algebras and conformal $(\a,\b,\g)$-derivations of Lie conformal algebras to the Lie conformal superalgebras.

This paper is organized as follows. In Section $1$, we recall several basic definitions of Lie conformal superalgebras and introduce the concept of conformal $(\si,\t)$-derivations of Lie conformal superalgebras. In Section $2$, we obtain some fundamental properties of conformal $(\si,\t)$-derivations. In Section $3$, we describe the interiors of conformal $G$-derivations and compute the corresponding Hilbert series to show its complexity. In Section $4$, we devote to study the connection between the conformal $(\si,\t)$-derivations and some generalized conformal derivations. What's more, we introduce the concept of conformal $(\a,\b,\g)$-derivations of Lie conformal superalgebras and obtain some connection between the generalized conformal derivations and the conformal $(\a,\b,\g)$-derivations.


\section{Preliminaries}
\def\theequation{\arabic{section}.\arabic{equation}}
\setcounter{equation} {0}

Let $V$ be a superspace that is a $\mathbb{Z}_2$-graded linear space with a direct sum
$V = V_{\bar0} \oplus V_{\bar1}$. The elements of $V_j$, $j = \{\bar0, \bar1\}$, are said to be homogenous and of parity $j$.
The parity of
a homogeneous element $x$ is denoted by $|x|$. Throughout what follows, if $|x|$ occurs
in an expression, then it is assumed that $x$ is homogeneous and that the expression
extends to the other elements by linearity.

 The following notion was due to \cite{Kac98}.

\begin{df}\cite{Kac98}
A \emph{Lie conformal superalgebra} $\R$ is a left $\mathbb{Z}_{2}$-graded $\mathbb{C}[\partial]$-module, and for any $n\in\mathbb{Z}_{\geq0}$ there is a
family of $\mathbb{C}$-linear  $n$-products from $\R\otimes\R$ to $\R$ satisfying the following conditions
\begin{enumerate}
\item[$(\rm C0)$]  For any $a,b\in\R$, $a_{(n)}b=0$ for $n\gg 0$,
\item[$(\rm C1)$]  For any $a,b\in\R$ and $n\in\mathbb{Z}_{\geq0}$, $(\partial a)_{(n)}b=-n a_{(n-1)}b$,
\item[$(\rm C2)$]  For any $a,b\in\R$ and $n\in\mathbb{Z}_{\geq0}$, $$a_{(n)}b=-(-1)^{|a||b|}\sum_{j=0}^{\infty}(-1)^{j+n}\frac{1}{j!}\partial^j(b_{(n+j)}a),$$
\item[$(\rm C3)$]  For any $a,b,c\in\R$ and $m,n\in\mathbb{Z}_{\geq0}$,
$$a_{(m)}(b_{(n)}c)=\sum_{j=0}^{m}(_{j}^{m})(a_{(j)}b)_{(m+n-j)}c+(-1)^{|a||b|}b_{(n)}(a_{(m)}c).$$
\end{enumerate}
(Convention: $a_{(n)}b = 0$ if  $n < 0$). Note that if we define $\lambda$-bracket $[-_{\lambda}-]$:
\begin{eqnarray*}
[a_\lambda b]=\sum_{n=0}^{\infty}\frac{\lambda^n}{n!}a_{(n)}b,\,\forall a,b\in\R.
\end{eqnarray*}
That is, $\R$ is a Lie conformal superalgebra if and only if $[-_{\lambda}-]$ satisfies the following axioms
\begin{eqnarray*}
&&(\rm C1)_{\lambda}\ \ \ \ {\rm  Conformal\ sesquilinearity}:\ \, [(\partial a)_\lambda b]=-\lambda[a_\lambda b];\\
&&(\rm C2)_{\lambda}\ \ \ \ {\rm Skew-supersymmetry}: \ \, [a_\lambda b]=-(-1)^{|a||b|}[b_{-\partial-\lambda}a];\\
&&(\rm C3)_{\lambda}\ \ \ \ {\rm Jacobi\ identity}:  \ \, [a_\lambda[b_\mu c]]=[[a_\lambda b]_{\lambda+\mu}c]+(-1)^{|a||b|}[b_\mu[a_\lambda c]].
\end{eqnarray*}
\end{df}
A Lie conformal superalgebras are called \emph{finite} if $\R$ is a finitely generated $\mathbb{C}[\pr]$-module. The \emph{rank} of a conformal algebra $\R$ is its rank as a $\mathbb{C}[\pr]$-module (recall that this is the dimension over $\mathbb{C}(\pr)$, the field of fractions of $\mathbb{C}[\pr]$, of $\mathbb{C}(\pr)\otimes_{\mathbb{C}[\pr]}\R$).

Throughout this paper, we assume that $\R$ is finite.

\begin{df}\cite{Kac98}
An \emph{associative conformal superalgebra} $\R$ is a left $\mathbb{Z}_{2}$-graded $\mathbb{C}[\partial]$-module endowed with a $\lambda$-product
from $\R\otimes\R$ to $\mathbb{C}[\lambda]\otimes\R$, for any $a,b,c\in\R$,
satisfying the following conditions:
\begin{enumerate}
\item [$(1)$] $(\partial a)_\lambda b=-\lambda a_\lambda b,\ a_\lambda(\partial b)=(\partial+\lambda)(a_\lambda b)$,
\item [$(2)$] $a_\lambda(b_\mu c)=(a_\lambda b)_{\lambda+\mu}c$.
\end{enumerate}
\end{df}

\begin{df}\cite{FKR2004}
Let $M$ and $N$ be $\mathbb{Z}_{2}$-graded $\mathbb{C}[\partial]$-modules. A \emph{conformal linear map} of degree $\theta$ from $M$ to $N$ is a sequence $f=\{f_{(n)}\}_{n\in\mathbb{Z}_{\geq0}}$ of $f_{(n)}\in\Hom_{\mathbb{C}}(M,N)$ satisfying that
 $$\partial_{N} f_{(n)}-f_{(n)}\partial_{M}=-n f_{(n-1)},\ n\in\mathbb{Z}_{\geq0} \ \ \ {\rm  and} \ \ \
f_{(n)}(M_{\mu})\subseteq N_{\mu+\theta},  \ \mu,\theta\in\mathbb{Z}_{2}.$$
 Set $f_\lambda=\sum_{n=0}^{\infty}\frac{\lambda^n}{n!}f_{(n)}$. Then
 $f=\{f_{(n)}\}_{n\in\mathbb{Z}_{\geq0}}$ is a conformal linear map of degree $\theta$ if and only if
 $$f_{\lambda}\partial_{M}=(\partial_{N}+\lambda) f_{\lambda} \ \ \ {\rm and} \ \ \
f_{\lambda}(M_{\mu})\subseteq N_{\mu+\theta}[\lambda],  \ \mu,\theta\in\mathbb{Z}_{2}.$$
\end{df}

Let $\Chom(M,N)_{\theta}$ denote the set of conformal linear maps of degree $\theta$ from $M$ to $N$. Then $\Chom(M,N)=\Chom(M,N)_{\bar{0}}\oplus\Chom(M,N)_{\bar{1}}$ is a $\mathbb{Z}_{2}$-graded $\mathbb{C}[\partial]$-module via:
\begin{eqnarray*}\partial f_{(n)}=-n f_{(n-1)},\ {\rm equivalently},\ \partial f_{\lambda}=-\lambda f_{\lambda}.\end{eqnarray*}
The composition $f_{\lambda}g:L\rightarrow N\otimes\mathbb{C}[\lambda]$ of conformal linear maps $f:M\rightarrow N$ and $g:L\rightarrow M$ is given by
 \begin{eqnarray*}
(f_{\lambda}g)_{\lambda+\mu}=f_{\lambda}g_{\mu},\ \ \ \forall \, f,g\in \Chom(M,N).
 \end{eqnarray*}

If $M$ is a finitely generated $\mathbb{Z}_{2}$-graded $\mathbb{C}[\partial]$-module, then $\Cend(M):=\Chom(M,M)$ is an associative conformal superalgebra with respect to the above composition. Thus, $\Cend(M)$ becomes a Lie conformal superalgebra, called the general linear Lie conformal superalgebra, denoted as $gc(M)$, with respect to the following $\lambda$-bracket(see \cite[Example1.1]{FKR2004}):
\begin{eqnarray}\label{def2-5}
[f_{\lambda}g]_{\mu}=f_{\lambda}g_{\mu-\lambda}-(-1)^{|f||g|}g_{\mu-\lambda}f_{\lambda}.
\end{eqnarray}
Hereafter all $\mathbb{Z}_{2}$-graded $\mathbb{C}[\partial]$-modules are supposed to be  finitely generated.

\begin{df}\cite{FKR2004}
Let $\R$ be a Lie conformal superalgebra. $d\in\Cend(\R)$ is a \emph{conformal derivation} of degree $\th$ if for any $a,b\in\R$ it holds that
\begin{eqnarray*}
d_{(m)}(a_{(n)}b)=\sum_{j=0}^{m}(_{j}^{m})(d_{(j)}a)_{(m+n-j)}b+(-1)^{|a||\th|}a_{(n)}(d_{(m)}(b));
\end{eqnarray*}
equivalently,
\begin{eqnarray*}
d_\lambda([a_\mu b])=[(d_\lambda(a))_{\lambda+\mu}b]+(-1)^{|a||\th|}[a_\mu(d_\lambda(b))].\label{defa}
\end{eqnarray*}
\end{df}
For any $r\in\R$, $d^{r}_{\lambda}$ is called an \emph{inner} conformal derivation of $\R$ if $d^{r}_{\lambda}(r')=[r_{\lambda}r']$, $\forall r'\in\R$.

Define $\CDer(\R)_{\it{\th}}$ is the set of conformal derivations of degree $\th$ of $\R$, and $\CDer(\R)=\rm{CDer}_{\bar{0}}(\R)\oplus \CDer_{\bar{1}}(\R)$. Then it is obvious that $\CDer(\R)$ is a subalgebra of $\Cend(\R)$.

\begin{df}\cite{ZCY2017}
An element $f$ in $\Cend(\R)_\theta$ is called
\begin{itemize}
\item a {\it generalized derivation} of degree $\theta$ of $\R$, if there exist $f^{'},f^{''}\in\Cend(\R)_\theta$ such that
\begin{eqnarray*}\label{5-1}
[(f_\lambda(a))_{\lambda+\mu}b]+(-1)^{\theta|a|}[a_\mu(f^{'}_\lambda(b))]=f^{''}_\lambda([a_\mu b]), \ \forall \ a,b\in\R.
\end{eqnarray*}
\item a {\it quasiderivation }of degree $\theta$ of $\R$, if there is $f^{'}\in\Cend(\R)_\theta$ such that
\begin{eqnarray*}\label{5-2}
[(f_\lambda(a))_{\lambda+\mu}b]+(-1)^{\theta|a|}[a_\mu(f_\lambda(b))]=f^{'}_\lambda([a_\mu b]), \ \forall \ a,b\in\R.
\end{eqnarray*}
\item a {\it centroid} of degree $\theta$ of $\R$, if it satisfies
\begin{eqnarray*}\label{5-3}
 [(f_\lambda(a))_{\lambda+\mu}b]=(-1)^{\theta|a|}[a_\mu(f_\lambda(b))]=f_\lambda([a_\mu b]), \ \forall \ a,b\in\R.
\end{eqnarray*}
\item a {\it quasicentroid} of degree $\theta$ of $\R$, if it satisfies
\begin{eqnarray*}\label{5-4}
[(f_\lambda(a))_{\lambda+\mu}b]=(-1)^{\theta|a|}[a_\mu(f_\lambda(b))], \ \forall \ a,b\in\R.
\end{eqnarray*}
\item a {\it central derivation} of degree $\theta$ of $\R$, if it satisfies
\begin{eqnarray*}\label{5-5}
[(f_\lambda(a))_{\lambda+\mu}b]=f_\lambda([a_\mu b])=0, \ \forall \ a,b\in\R.
\end{eqnarray*}
\end{itemize}
\end{df}

Denote by $\GDer(\R)_\theta$, $\QDer(\R)_\theta$, $\C(\R)_\theta$, $\QC(\R)_\theta$ and $\ZDer(\R)_\theta$ the sets of all generalized derivations, quasiderivations, centroids, quasicentroids and central derivations of degree $\theta$ of $\R$.

\begin{df}\cite{Kac97}
Let $\R,\R^{'}$ be two Lie conformal superalgebras. A \emph{homomorphism} $\f$ from $\R$ to $\R^{'}$ of Lie conformal superalgebras is a homogeneous $\mathbb{C}[\partial]$-linear homomorphism if for any $a,b\in\R$ it holds that
$$\f(a_{(n)}b)=(\f(a)_{(n)}\f(b));$$
equivalently,
$$\f[a_{\l}b]=[\f(a)_{\l}\f(b)].$$
\end{df}

We call $\f$ an isomorphism if it is bijective. We call $\f$ an endomorphism if $\R=\R^{'}$. We call $\f$ an automorphism if it is bijective and if $\R=\R^{'}$.

In the following, we denote $\rm{Aut}(\R)$ the automorphism group of $\R$.

\begin{df}
Suppose that $\R$ is a Lie conformal superalgebra and $G$ is a subgroup of $\rm{Aut}(\R)$. Then $d\in\Cend(\R)$ is a \emph{conformal $G$-derivation} of degree $\th$ of $\R$ if there exist two element $\si,\t$ in $G$ such that
$$d_\l([a_\mu b])=[(d_\l(a))_{\l+\mu}(\si(b))]+(-1)^{|a||\th|}[(\t(a))_\mu (d_\l(b))], \quad \forall a, b \in\R.$$
In this case, $\si$ and $\t$ are called the \emph{associated} automorphisms of $d$.
\end{df}

Denote by $\CDer_G(\R)$ the set of all conformal $G$-derivations of $\R$. It is clear that $\CDer_G(\R)=\CDer(\R)$ if $G$ is a trivial group, that is $G=\left\{\rm{id}_\R\right\}$. Thus, conformal $G$-derivations can be viewed as a generalization of conformal derivations. What's more, if $G\le H$ are two subgroups of $\rm{Aut}(\R)$, then we have $\CDer_G(\R) \subseteq \CDer_H(\R)$ and $\CDer(\R)$ is contained in $\CDer_G(\R)$ for any subgroup $G$ of $\rm{Aut}(\R)$.

Fix two automorphisms $\si,\t \in G$, we denote by $\CDer_{\si,\t}(\R)$ the set of all conformal $G$-derivations associated to $\si$ and $\t$, called the \emph{conformal $(\si,\t)$-derivation}.
It is clear that $\CDer_{\si,\t}(\R)\subseteq \CDer_G(\R)$ is a $\mathbb{C}[\partial]$-module and $\CDer_{\rm{id}_\R,\rm{id}_\R}(\R)=\CDer(\R)$. For convenience, we denote $\CDer_{\si,\rm{id}_\R}(\R)$ by $\CDer_\si(\R)$.

In the following, $G$ always denotes a subgroup of $\rm{Aut}(\R)$.

\section{Fundamental properties}
\def\theequation{\arabic{section}.\arabic{equation}}
\setcounter{equation} {0}

In this section, we aim to show several fundamental properties of conformal $(\si,\t)$-derivations.
\begin{prop}\label{prop:2.1}
Suppose that $\R$ is a Lie conformal superalgebra and  $\si, \t$ are two elements in $G$, then $\rm{rank}(CDer_{\it{\si, \t}}(\R)) = \rm{rank}(CDer_{\it{\t^{-\rm{1}}\si}}(\R))$.
\end{prop}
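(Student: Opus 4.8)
The plan is to exhibit an explicit isomorphism of $\mathbb{C}[\partial]$-modules between $\CDer_{\si,\t}(\R)$ and $\CDer_{\t^{-1}\si}(\R)$; since isomorphic modules over the principal ideal domain $\mathbb{C}[\partial]$ have the same rank (computed, as in the excerpt, as the dimension over the field of fractions after base change), this immediately yields the claimed equality. The candidate map is
\[
\Phi\colon \CDer_{\si,\t}(\R)\longrightarrow \CDer_{\t^{-1}\si}(\R),\qquad \Phi(d)=\t^{-1}\circ d,
\]
where $(\t^{-1}\circ d)_\l=\t^{-1}\circ d_\l$. The heart of the argument is to check that $\Phi$ is well defined, i.e. that $\t^{-1}\circ d$ really is a conformal $(\t^{-1}\si,\id_\R)$-derivation.

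First I would verify that $\t^{-1}\circ d$ is a conformal linear map of the same degree $\th$ as $d$. Since $\t$ is a $\mathbb{C}[\partial]$-linear automorphism it commutes with $\partial$, so from $d_\l\partial=(\partial+\l)d_\l$ one obtains $(\t^{-1}d_\l)\partial=\t^{-1}(\partial+\l)d_\l=(\partial+\l)(\t^{-1}d_\l)$, and $\t^{-1}$, being even, preserves the parity shift by $\th$.

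Next comes the main step: apply the automorphism $\t^{-1}$ to the defining identity of $d\in\CDer_{\si,\t}(\R)$,
\[
d_\l([a_\mu b])=[(d_\l(a))_{\l+\mu}\si(b)]+(-1)^{|a||\th|}[\t(a)_\mu d_\l(b)],
\]
and use the homomorphism property $\t^{-1}[x_\nu y]=[\t^{-1}(x)_\nu\,\t^{-1}(y)]$ together with $\t^{-1}\circ\t=\id_\R$. This rewrites the right-hand side as $[((\t^{-1}d)_\l(a))_{\l+\mu}(\t^{-1}\si)(b)]+(-1)^{|a||\th|}[a_\mu (\t^{-1}d)_\l(b)]$, which is exactly the $(\t^{-1}\si,\id_\R)$-derivation identity for $\t^{-1}\circ d$. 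No extra signs appear because $\t$ is even, so $\Phi(d)\in\CDer_{\t^{-1}\si}(\R)$.

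Finally I would observe that $\Phi$ is $\mathbb{C}[\partial]$-linear, since it commutes with the action $\partial f_\l=-\l f_\l$ by $\mathbb{C}[\partial]$-linearity of $\t^{-1}$, and that it is bijective with inverse $d'\mapsto \t\circ d'$; the reverse computation, applying $\t$ to the $(\t^{-1}\si,\id_\R)$-identity, lands back in $\CDer_{\si,\t}(\R)$. Hence $\Phi$ is an isomorphism of $\mathbb{C}[\partial]$-modules and the two modules share the same rank. I expect the only delicate point to be the well-definedness bookkeeping—confirming the $\partial$-compatibility and the vanishing of the parity signs—while everything else is a direct transport of structure along the automorphism $\t$.
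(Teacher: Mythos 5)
Your proposal is correct and follows essentially the same route as the paper: you define the map $d\mapsto\t^{-1}d$, verify well-definedness by applying $\t^{-1}$ to the defining identity of a conformal $(\si,\t)$-derivation, check $\mathbb{C}[\partial]$-linearity, and exhibit $d'\mapsto\t d'$ as the inverse, concluding equality of ranks from the module isomorphism. Your additional checks (compatibility with $\partial$ and the parity bookkeeping, using that $\t$ is even) are details the paper leaves implicit but are entirely consistent with its argument.
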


\begin{proof}
Define a map $\vp_{\tau} : \CDer_{\it{\si, \t}}(\R) \ra \CDer_{\it{\t^{-\rm{1}}\si}}(\R)$ by
$$\vp_{\t}(d) = \t^{-1}d, \quad \forall d \in \CDer_{\it{\si, \t}}(\R).$$
For any $a,b\in\R$, we have
\begin{align*}
&~~~~\t^{-1}(d_\l([a_\mu b]))= \t^{-1}([(d_\l(a))_{\l+\mu}(\si(b))] + (-1)^{|a||d|}[(\t(a))_\mu (d_\l(b))])\\
&= [(\t^{-1}(d_\l(a)))_{\l+\mu}(\t^{-1}(\si(b)))] + (-1)^{|a||d|}[a_\mu(\t^{-1}(d_\l(b)))],
\end{align*}
which means that $\t^{-1}d \in \CDer_{\it{\t^{-\rm{1}}\si}}(\R)$, i.e., the map $\vp_{\t}$ is well-defined.
What's more
$$\vp_\t(d_1+d_2)=\t^{-1}(d_1+d_2)=\t^{-1}d_1+\t^{-1}d_2=\vp_\t d_1+\vp_\t d_2,$$
$$\vp_\t(\partial d)=\vp_\t(-\l d_\l)=-\l\vp_\t(d_\l)=-\l\t^{-1}(d_\l)=\partial\vp_\t(d_\l).$$
That is to say, $\vp_\t$ is a $\mathbb{C}[\partial]$-module homomorphism.

In the following, it still needs to show that $\vp_\t$ is an isomorphism. So we try to see its inverse. We can define a map $\p_\t: \CDer_{\it{\t^{-1}\si}}(\R)\ra \CDer_{\it{\si,\t}}(\R)$ by $\p_\t(d)=\t d$ for any $d$ in $\CDer_{\it{\t^{-1}\si}}(\R)$. Similarly, we can verify that $\p_\t$ is a well defined $\mathbb{C}[\partial]$-module homomorphism.
What's more, we have $\p_\t\vp_\t=\rm{id}_{CDer_{\it{\si,\t}}(\R)}$ and $\vp_\t\p_\t=\rm{id}_{CDer_{\it{\t^{-1}\si}}(\R)}$, which means $\p_\t$ is the inverse of $\vp_\t$. Therefore, $\CDer_{\it{\si,\t}}(\R)$ and $\CDer_{\it{\t^{-1}\si}}(\R)$ are isomorphic as $\mathbb{C}[\partial]$-modules, and we have $\rm{rank}(CDer_{\it{\si, \t}}(\R))= \rm{rank}(CDer_{\it{\t^{-\rm{1}}\si}}(\R))$.
\end{proof}

Proposition \ref{prop:2.1} means that the study of $\CDer_{\it{\si,\t}}(\R)$ with two parameters $\si,\t$ can be reduced to the study of $\CDer_{\it{\si^{'}}}(\R)$ with one parameter $\si^{'}=\t^{-1}\si$. Particularly, if we take $\t=\si$, then $\CDer_{\it{\si,\si}}(\R)$ and $\CDer(\R)$ are isomorphic as $\mathbb{C}[\partial]$-modules. Moreover, we may extend this isomorphic relation to the level of Lie conformal superalgebras.

\begin{prop}\label{prop:2.2}
Suppose that $\R$ is a Lie conformal superalgebra and  $\si$ is an element in $G$. Then $\CDer_{\it{\si, \si}}(\R)$ can be viewed as a Lie conformal superalgebra and $\Der_{\it{\si,\si}}(\R) \cong \CDer(\R)$ as Lie conformal superalgebra.
\end{prop}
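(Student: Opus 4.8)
The plan is to equip $\CDer_{\si,\si}(\R)$ with a Lie conformal superalgebra structure by transport of structure along the isomorphism already produced in Proposition \ref{prop:2.1}, rather than by trying to realize it directly as a subalgebra of $gc(\R)$. Recall that $\CDer(\R)$ is a subalgebra of $gc(\R)$ and is therefore itself a Lie conformal superalgebra under the bracket \eqref{def2-5}. Specializing Proposition \ref{prop:2.1} to the case $\t=\si$ yields a $\mathbb{C}[\partial]$-module isomorphism $\vp_\si:\CDer_{\si,\si}(\R)\ra\CDer_{\si^{-1}\si}(\R)=\CDer(\R)$, $\vp_\si(d)=\si^{-1}d$, with inverse $\p_\si(d)=\si d$. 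Since $\si$ is even, both maps preserve parity and commute with $\partial$.

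First I would pull the bracket of $\CDer(\R)$ back along $\vp_\si$, setting, for $f,g\in\CDer_{\si,\si}(\R)$,
\[
[f_\lambda g]^{\si}:=\p_\si\big([(\vp_\si(f))_\lambda(\vp_\si(g))]\big)=\si\big[(\si^{-1}f)_\lambda(\si^{-1}g)\big],
\]
the inner bracket being computed in $gc(\R)$. The one point that genuinely needs checking is that this operation is well defined, i.e. that it closes inside $\CDer_{\si,\si}(\R)$: given $f,g$ as above, $\si^{-1}f$ and $\si^{-1}g$ lie in $\CDer(\R)$, so their $gc(\R)$-bracket again lies in $\CDer(\R)=\CDer_{\si^{-1}\si}(\R)$ because $\CDer(\R)$ is a subalgebra of $gc(\R)$; applying $\p_\si$ then returns an element of $\CDer_{\si,\si}(\R)$, precisely by the well-definedness of $\p_\si$ verified in Proposition \ref{prop:2.1}.

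Finally I would verify that $\CDer_{\si,\si}(\R)$ with the bracket $[-_\lambda-]^{\si}$ and the operator $\partial$ satisfies the three axioms $(\rm C1)_\lambda$--$(\rm C3)_\lambda$. Because $\vp_\si$ is a $\mathbb{C}[\partial]$-module isomorphism commuting with $\partial$ and the new bracket is by construction the image of the bracket of $\CDer(\R)$, each axiom reduces, after inserting the definition and cancelling $\si^{\pm1}$, to the same axiom already known in $\CDer(\R)$; the parity signs match because $\si$ is even (for instance, skew-supersymmetry of $[-_\lambda-]^\si$ follows from that of $[-_\lambda-]$ in $gc(\R)$ together with $|\si^{-1}f|=|f|$). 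This simultaneously shows that $\CDer_{\si,\si}(\R)$ is a Lie conformal superalgebra and that $\vp_\si$ is an isomorphism of Lie conformal superalgebras, giving $\CDer_{\si,\si}(\R)\cong\CDer(\R)$. I expect the closure step of the second paragraph to be the only real obstacle—it is what forces the $\si$-twist in the bracket instead of the naive $gc(\R)$-bracket—whereas the axiom check is formal transport of structure resting on Proposition \ref{prop:2.1}.
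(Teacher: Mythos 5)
Your proposal is correct and follows essentially the same route as the paper: transport of structure along the isomorphism $\vp_\si$ of Proposition \ref{prop:2.1}, with the twisted bracket $[f_\lambda g]^{\si}=\vp_\si^{-1}\bigl([(\vp_\si(f))_\lambda(\vp_\si(g))]\bigr)$, which coincides with your $\p_\si\bigl([(\vp_\si(f))_\lambda(\vp_\si(g))]\bigr)$ since $\p_\si=\vp_\si^{-1}$. The only difference is one of emphasis: the paper writes out the verification of $(\rm C1)_\lambda$--$(\rm C3)_\lambda$ term by term and dismisses well-definedness as obvious from bijectivity, whereas you correctly isolate closure (that the $gc(\R)$-bracket of elements of $\CDer(\R)$ stays in $\CDer(\R)$, so $\p_\si$ may be applied) as the point that actually needs the subalgebra property, and treat the axioms as formal.
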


\begin{proof}
Define a $\l$-bracket $[-_{\l}-]^{\si}$ on $\CDer_{\it{\si, \si}}(\R)\times\CDer_{\it{\si, \si}}(\R)$ as follow:
\[[f_{\l}g]^{\si} = \vp_{\si}^{-1}([(\vp_{\si}(f))_{\l} (\vp_{\si}(g))]),\quad\forall f, g \in \CDer_{\it{\si, \si}}(\R),\]
where $\varphi_{\sigma}$ is defined as Proposition \ref{prop:2.1}.  It's obvious that $[-_{\l}-]^{\si}$ is well-defined since $\varphi_{\sigma}$ is a bijective map. And it is bilinear because both $\vp_{\si}$ and $\vp_{\si}^{-1}$ are isomorphic as $\mathbb{C}[\partial]$-modules.

For any $f, g$ in $\CDer_{\it{\si, \si}}(\R)$, we have
\begin{align*}
&~~~~[\partial f_{\l}g]^{\si}=\vp_{\si}^{-1}([(\vp_{\si}(\partial f))_{\l} (\vp_{\si}(g))]) \\
&= \vp_{\si}^{-1}([(\vp_{\si}(-\l f))_{\l} (\vp_{\si}(g))]) =-\l\vp_{\si}^{-1}([(\vp_{\si}(f))_{\l} (\vp_{\si}(g))]) = -\l[f_{\l}g]^{\si},
\end{align*}
and
\[[f_{\l}g]^{\si} = \vp_{\si}^{-1}([(\vp_{\si}(f))_{\l} (\vp_{\si}(g))]) = -(-1)^{|f||g|}\vp_{\si}^{-1}([(\vp_{\si}(g))_{-\partial-\l} (\vp_{\si}(f))]) = -(-1)^{|f||g|}[g_{-\partial-\l} f]^{\si}.\]
For any $f, g, h$ in $\CDer_{\it{\si, \si}}(\R)$, we have
\begin{align*}
&~~~~[[f_{\l}g]^{\si}_{\l+\mu}h]^{\si}
=[\vp_{\si}^{-1}([(\vp_{\si}(f))_{\l}(\vp_{\si}(g))])_{\l+\mu} h]^{\si}\\
&=\vp_{\si}^{-1}([\vp_{\si}(\vp_{\si}^{-1}([(\vp_{\si}(f))_{\l} (\vp_{\si}(g))]))_{\l+\mu}(\vp_{\si}(h))])\\
&= \vp_{\si}^{-1}([[(\vp_{\si}(f))_{\l} (\vp_{\si}(g))]_{\l+\mu}(\vp_{\si}(h))]).
\end{align*}
Similarly, we have
\[[f_{\l}[g_{\mu}h]^{\si}]^{\si} = \vp_{\si}^{-1}([(\vp_{\si}(f))_{\l}[(\vp_{\si}(g))_{\mu} (\vp_{\si}(h))]]),\]
\[[g_{\mu}[f_{\l}h]^{\si}]^{\si} = \vp_{\si}^{-1}([(\vp_{\si}(g))_{\mu}[(\vp_{\si}(f))_{\l} (\vp_{\si}(h))]]).\]
So we have
\begin{align*}
&~~~~[[f_{\l}g]^{\si}_{\l+\mu}h]^{\si} - [f_{\l}[g_{\mu}h]^{\si}]^{\si} + (-1)^{|f||g|}[g_{\mu}[f_{\l}h]^{\si}]^{\si}\\
&= \vp_{\si}^{-1}([[(\vp_{\si}(f))_{\l} (\vp_{\si}(g))]_{\l+\mu}(\vp_{\si}(h))]) - \vp_{\si}^{-1}([(\vp_{\si}(f))_{\l}[(\vp_{\si}(g))_{\mu} (\vp_{\si}(h))]])\\
&~~~~+ (-1)^{|f||g|} \vp_{\si}^{-1}([(\vp_{\si}(g))_{\mu}[(\vp_{\si}(f))_{\l} (\vp_{\si}(h))]])\\
&= \vp_{\si}^{-1}([[(\vp_{\si}(f))_{\l} (\vp_{\si}(g))]_{\l+\mu}(\vp_{\si}(h))] - [\vp_{\si}(f)_{\l}[(\vp_{\si}(g))_{\mu} (\vp_{\si}(h))]]\\
&~~~~+(-1)^{|f||g|}[(\vp_{\si}(g))_{\mu}[(\vp_{\si}(f))_{\l} (\vp_{\si}(h))]])\\
&= \varphi_{\sigma}^{-1}(0)=0,
\end{align*}
which means that
$[f_{\l}[g_{\mu}h]^{\si}]^{\si}
=[[f_{\l}g]^{\si}_{\l+\mu}h]^{\si}+
(-1)^{|f||g|}[g_{\mu}[f_{\l}h]^{\si}]^{\si}$.
Hence, $(\CDer_{\it{\si, \si}}(\R), [-_{\l}-]^{\si})$ is a Lie conformal superalgebra.

To complete the proof, it still needs to show that $\vp_{\si}$ is a Lie conformal superalgebras homomorphism. Actually,
$\vp_{\si}([f_{\l}g]^{\si}) = \vp_{\si}(\vp_{\si}^{-1}([(\vp_{\si}(f))_{\l} (\vp_{\si}(g))]))=[(\vp_{\si}(f))_{\l} (\vp_{\si}(g))]$
and $\vp_{\si}$ is a Lie conformal superalgebras homomorphism between $\CDer_{\it{\si, \si}}(\R)$ and $\CDer(\R)$, as desired.
\end{proof}

Recall that the center of a Lie conformal superalgebra $\R$ is the set $\rm{Z}(\R)=\{\it{a}\in\R\;|\;[\it{a}_{\l}\it{b}]=\rm{0},\,\forall \it{b}\in\R\}$, and the centralizer of $a$ in $\R$ is the set $\rm{Z}_{\it{a}}(\R)=\{\it{b}\in\R\;|\;[\it{a}_{\l}\it{b}]=\rm{0}\}$.

\begin{prop}\label{prop:2.3}
Suppose that $\si,\t$ are two elements in $G$ such that $(\si-\t)(\R)\subseteq\rm{Z}(\R)$, then $\CDer_{\it{\si}}(\R) = \CDer_{\it{\t}}(\R)$. In addition, if $(\si - \rm{id}_{\R})(\R) \subseteq \rm{Z}(\R)$, then $\CDer_{\it{\si}}(\R) = \CDer
(\R)$ is a Lie conformal subalgebra of $\Cend(\R)$.
\end{prop}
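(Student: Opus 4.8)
The plan is to prove the two assertions by direct substitution into the defining identity of conformal $(\si,\t)$-derivations, exploiting the hypothesis that certain differences of automorphisms land in the center $\Z(\R)$. For the first claim, I would take an arbitrary $d\in\CDer_\si(\R)$ (so that $\t=\id_\R$ in the defining relation, i.e.\ the associated pair is $(\si,\id_\R)$) and compare its defining identity with the identity required for membership in $\CDer_\t(\R)$. Concretely, for $d\in\CDer_\si(\R)$ we have
\[
d_\l([a_\mu b])=[(d_\l(a))_{\l+\mu}(\si(b))]+(-1)^{|a||d|}[a_\mu(d_\l(b))],
\]
and I want to show the same $d$ satisfies the analogous relation with $\t$ in place of $\si$. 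The key observation is that $[(d_\l(a))_{\l+\mu}(\si(b))]$ and $[(d_\l(a))_{\l+\mu}(\t(b))]$ differ by $[(d_\l(a))_{\l+\mu}((\si-\t)(b))]$, and since $(\si-\t)(b)\in\Z(\R)$ this bracket vanishes by the definition of the center. Hence the $\si$-identity and the $\t$-identity coincide term by term, giving $\CDer_\si(\R)\subseteq\CDer_\t(\R)$; the reverse inclusion follows symmetrically (or by noting $(\t-\si)(\R)=(\si-\t)(\R)\subseteq\Z(\R)$), yielding equality.

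For the second assertion, I would simply specialize $\t=\id_\R$ in the first part: the hypothesis $(\si-\id_\R)(\R)\subseteq\Z(\R)$ is exactly the condition $(\si-\t)(\R)\subseteq\Z(\R)$ with $\t=\id_\R$, so the first claim immediately gives $\CDer_\si(\R)=\CDer_{\id_\R}(\R)=\CDer(\R)$, where the last equality is the already-noted fact that $\CDer_{\id_\R,\id_\R}(\R)=\CDer(\R)$. It then remains only to observe that $\CDer(\R)$ is a Lie conformal subalgebra of $\Cend(\R)$, which is recorded in the excerpt right after the definition of conformal derivations (``it is obvious that $\CDer(\R)$ is a subalgebra of $\Cend(\R)$'').

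The only subtle point I would be careful about is the placement of the center condition: one must check that $(\si-\t)(b)$ sits inside the \emph{first} or \emph{second} slot of a bracket in such a way that centrality actually kills the term. In the $\si$-derivation identity the automorphism acts on the second argument $b$ of $[(d_\l(a))_{\l+\mu}(\cdot)]$, so I need $(\si-\t)(b)$ to annihilate brackets from the right; by skew-supersymmetry $(\rm C2)_\l$ an element of $\Z(\R)$ annihilates brackets from either side, so this causes no difficulty. I do not anticipate a genuine obstacle here—the proof is essentially a one-line cancellation once the bracket difference is isolated—so the main task is organizational: writing the two defining identities side by side and invoking $(\si-\t)(\R)\subseteq\Z(\R)$ cleanly, then chaining the equalities $\CDer_\si(\R)=\CDer(\R)$ with the known subalgebra property to conclude.
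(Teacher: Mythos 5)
Your proposal is correct and follows essentially the same route as the paper's proof: isolate the difference $[(d_\l(a))_{\l+\mu}((\si-\t)(b))]$, kill it using $(\si-\t)(\R)\subseteq{\rm Z}(\R)$, obtain $\CDer_{\si}(\R)\subseteq\CDer_{\t}(\R)$, get the reverse inclusion by symmetry, and then specialize $\t={\rm id}_{\R}$ for the second claim. Your extra remark that centrality kills brackets in the second slot via skew-supersymmetry is a point the paper leaves implicit, so it is a welcome (if minor) addition rather than a deviation.
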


\begin{proof}
For any  $d$ in $\CDer_{\it{\si}}(\R)$ and
since $(\si - \t)(\R) \subseteq \rm{Z}(\R)$, we have
$$[(d_{\l}(a))_{\l+\mu}((\si-\t)(b))]= 0,\quad\forall a, b\in\R,$$  that is  $[(d_{\l}(a))_{\l+\mu}(\si(b))]=[(d_{\l}(a))_{\l+\mu}(\t(b))]$.
So we can get
\[d_{\l}([a_{\mu}b])
=[(d_{\l}(a))_{\l+\mu}(\si(b))]+(-1)^{|a||d|}[a_{\mu}(d_{\l}(b))] =[(d_{\l}(a))_{\l+\mu}(\t(b))]+(-1)^{|a||d|}[a_{\mu}(d_{\l}(b))] ,\]
which implies that $d \in \CDer_{\it{\t}}(\R)$ and $\CDer_{\it{\si}}(\R) \subseteq \CDer_{\it{\t}}(\R)$.
By switching the roles of $\si$ and $\t$, we have $\CDer_{\it{\t}}(\R) \subseteq \CDer_{\it{\si}}(\R)$. Consequently, we obtain $\CDer_{\it{\si}}(\R) = \CDer_{\it{\t}}(\R)$.

In particular, if we take $\t=\rm{id}_{\R}$, then $\CDer_{\it{\si}}(\R) = \CDer(\R)$ is a Lie conformal subalgebra of $\Cend(\R)$.
\end{proof}

\begin{prop}\label{prop:2.4}
Suppose that $\si,\si^{'}$ are two elements in $G$ such that $f\in\CDer_{\it{\si}}(\R)$ and $g\in\CDer_{\it{\si^{'}}}(\R)$. If $\si$ and $\si^{'}$ commute, $f$ commutes with $\si^{'}$, and $g$ commutes with $\si$, then $[f_{\l}g]_{\mu}
\in\CDer_{\it{\si}\it{\si^{'}}}(\R)$.
\end{prop}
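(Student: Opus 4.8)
The plan is to verify the defining identity of a conformal $(\si\si^{'})$-derivation head on. Since $f\in\CDer_{\si}(\R)=\CDer_{\si,\id}(\R)$, $g\in\CDer_{\si^{'}}(\R)=\CDer_{\si^{'},\id}(\R)$ and $\CDer_{\si\si^{'}}(\R)=\CDer_{\si\si^{'},\id}(\R)$, writing $\th=|f|+|g|$ for the degree of $[f_{\l}g]$, the goal is to show that for all $a,b\in\R$,
\[[f_{\l}g]_{\mu}([a_{\nu}b])=[([f_{\l}g]_{\mu}(a))_{\mu+\nu}(\si\si^{'}(b))]+(-1)^{|a|\th}[a_{\nu}([f_{\l}g]_{\mu}(b))];\]
note in particular that the first-slot twist on $a$ stays $\id$, which is exactly why the commutator again lands in a one-parameter family $\CDer_{\si\si^{'}}(\R)$. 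Starting from the explicit bracket formula \eqref{def2-5}, I would write
\[[f_{\l}g]_{\mu}([a_{\nu}b])=f_{\l}\big(g_{\mu-\l}([a_{\nu}b])\big)-(-1)^{|f||g|}g_{\mu-\l}\big(f_{\l}([a_{\nu}b])\big).\]

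First I would expand $g_{\mu-\l}([a_{\nu}b])$ using the $\CDer_{\si^{'}}$-property and then apply $f_{\l}$ to each resulting summand using the $\CDer_{\si}$-property; symmetrically I would expand $f_{\l}([a_{\nu}b])$ and apply $g_{\mu-\l}$. This produces eight terms. In each nested application the inner map twists the second argument (by $\si^{'}$ from $g$, or by $\si$ from $f$), and the outer map either twists that argument once more or has to be moved past the inner automorphism. This is precisely where the three hypotheses are consumed: $f_{\l}\si^{'}=\si^{'}f_{\l}$ and $g_{\mu-\l}\si=\si g_{\mu-\l}$ allow me to turn $f_{\l}(\si^{'}(b))$ into $\si^{'}(f_{\l}(b))$ and $\si(g_{\mu-\l}(b))$ into $g_{\mu-\l}(\si(b))$, while $\si\si^{'}=\si^{'}\si$ identifies the two composite twists $\si\si^{'}$ and $\si^{'}\si$ that surface on the two leading terms.

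For the recombination, the two summands carrying the twist $\si\si^{'}$ on $b$ combine, by linearity of the $\l$-bracket in its first argument, into $[((f_{\l}g_{\mu-\l}-(-1)^{|f||g|}g_{\mu-\l}f_{\l})(a))_{\mu+\nu}(\si\si^{'}(b))]=[([f_{\l}g]_{\mu}(a))_{\mu+\nu}(\si\si^{'}(b))]$, and two further summands combine into $(-1)^{|a|\th}[a_{\nu}([f_{\l}g]_{\mu}(b))]$. It then remains to check that the four leftover cross terms, of the shapes $[(g_{\mu-\l}(a))_{\mu-\l+\nu}(\si^{'}(f_{\l}(b)))]$ and $[(f_{\l}(a))_{\l+\nu}(g_{\mu-\l}(\si(b)))]$, cancel in two pairs.

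I expect the sign bookkeeping to be the only genuine obstacle. Each cross term carries a Koszul sign accumulated from pushing the homogeneous maps $f$ and $g$ past homogeneous elements, and cancellation demands that the two members of each pair come with opposite overall signs. Reducing the exponents modulo $2$, the two coefficients within a pair agree (they differ by $0$ or by $2|f||g|\equiv0\pmod 2$), and since exactly one member of each pair inherits the global minus sign from the commutator formula \eqref{def2-5}, the pairs cancel. Once all eight exponents are recorded this last step is purely mechanical, and with it the displayed identity---hence $[f_{\l}g]_{\mu}\in\CDer_{\si\si^{'}}(\R)$---follows.
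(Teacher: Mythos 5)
Your proposal is correct and takes essentially the same route as the paper's proof: expand $[f_{\l}g]_{\mu}([a_{\g}b])$ into eight terms via the nested derivation properties of $f$ and $g$, use the three commutation hypotheses to align the twists, cancel the four cross terms in two pairs, and recombine the remaining four into $[([f_{\l}g]_{\mu}(a))_{\mu+\g}(\si\si^{'}(b))]+(-1)^{|a|(|f|+|g|)}[a_{\g}([f_{\l}g]_{\mu}(b))]$. Your sign bookkeeping is also accurate: one cancelling pair has identical Koszul exponents and the other differs by $2|f||g|\equiv 0 \pmod 2$, with the commutator's global minus supplying the cancellation, exactly as in the paper's computation.
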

\begin{proof}
For any $a,b\in \R$, we have
\begin{align*}
&~~~~f_{\l}(g_{\mu-\l}([a_{\g}b]))
=f_{\l}([(g_{\mu-\l}(a))_{\mu-\l+\g}(\si^{'}(b))]
+(-1)^{|a||g|}[a_{\g}(g_{\mu-\l}(b))])\\
&=[(f_{\l}(g_{\mu-\l}(a)))_{\mu+\g}(\si(\si^{'}(b)))]
+(-1)^{(|a|+|g|)|f|}[(g_{\mu-\l}(a))_{\mu-\l+\g}(f_{\l}(\si^{'}(b)))]\\
&~~~~+(-1)^{|a||g|}[(f_{\l}(a))_{\l+\g}(\si (g_{\mu-\l}(b)))]
+(-1)^{|a||g|+|a||f|}[a_{\g}(f_{\l}(g_{\mu-\l}(b)))],
\end{align*}
and
\begin{align*}
&~~~~g_{\mu-\l}(f_{\l}([a_{\g}b]))
=g_{\mu-\l}([(f_{\l}(a))_{\l+\g}(\si(b))]
+(-1)^{|a||f|}[a_{\g}(f_{\l}(b))])\\
&=[(g_{\mu-\l}(f_{\l}(a)))_{\mu+\g}(\si^{'}(\si(b)))]
+(-1)^{(|a|+|f|)|g|}[(f_{\l}(a))_{\l+\g}(g_{\mu-\l}(\si(b)))]\\
&~~~~+(-1)^{|a||f|}[(g_{\mu-\l}(a))_{\mu-\l+\g}(\si^{'} (f_{\l}(b)))]
+(-1)^{|a||f|+|a||g|}[a_{\g}(g_{\mu-\l}(f_{\l}(b)))].
\end{align*}
Since $\si$ and $\si^{'}$ commute, $f$ commutes with $\si^{'}$, and $g$ commutes with $\si$, it follows that
\begin{align*}
&~~~~[f_{\l}g]_{\mu}([a_{\g}b])
=(f_{\l}g_{\mu-\l}-(-1)^{|f||g|}g_{\mu-\l}f_{\l})([a_{\g}b])
=f_{\l}(g_{\mu-\l}([a_{\g}b]))-(-1)^{|f||g|}g_{\mu-\l}(f_{\l}([a_{\g}b]))\\
&=[(f_{\l}(g_{\mu-\l}(a)))_{\mu+\g}(\si(\si^{'}(b)))]
+(-1)^{(|a|+|g|)|f|}[(g_{\mu-\l}(a))_{\mu-\l+\g}(f_{\l}(\si^{'}(b)))]\\
&~~~~+(-1)^{|a||g|}[(f_{\l}(a))_{\l+\g}(\si (g_{\mu-\l}(b)))]
+(-1)^{|a||g|+|a||f|}[a_{\g}(f_{\l}(g_{\mu-\l}(b)))]\\
&~~~~-(-1)^{|f||g|}[(g_{\mu-\l}(f_{\l}(a)))_{\mu+\g}(\si^{'}(\si(b)))]
-(-1)^{|a||g|}[(f_{\l}(a))_{\l+\g}(g_{\mu-\l}(\si(b)))]\\
&~~~~-(-1)^{|a||f|+|f||g|}[(g_{\mu-\l}(a))_{\mu-\l+\g}(\si^{'}(f_{\l}(b)))]
-(-1)^{|a||f|+|a||g|+|f||g|}[a_{\g}(g_{\mu-\l}(f_{\l}(b)))]\\
&=[([f_{\l}g]_{\mu}(a))_{\mu+\g}(\si(\si^{'}(b)))]
+(-1)^{|a|(|f|+|g|)}[a_{\g}([f_{\l}g]_{\mu}(b))],
\end{align*}
which shows that $[f_{\l}g]_{\mu}
\in\CDer_{\it{\si}\it{\si^{'}}}(\R)$.
\end{proof}

\begin{cor}\label{corollary:2.5}
Suppose that $\si$ is an involutive automorphism of $G$. If $\si$ commutes with every element of $\CDer_{\it{\si}}(\R)$, then $\CDer_{\it{\si}}(\R)$ is a Lie conformal superalgebra.
\end{cor}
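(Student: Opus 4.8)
The plan is to exhibit $\CDer_{\si}(\R)$ as a Lie conformal subalgebra of the general linear Lie conformal superalgebra $gc(\R)$. We already know that $\CDer_{\si}(\R)$ is a $\mathbb{C}[\partial]$-submodule of $\Cend(\R)$, and that $gc(\R)$ is a Lie conformal superalgebra under the bracket \eqref{def2-5}; consequently conformal sesquilinearity, skew-supersymmetry and the Jacobi identity are inherited automatically by any $\mathbb{C}[\partial]$-submodule that happens to be closed under $[-_{\l}-]$. Thus the whole content of the statement collapses to a single closure check: for all $f,g\in\CDer_{\si}(\R)$ one must show $[f_{\l}g]_{\mu}\in\CDer_{\si}(\R)$.

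For the closure I would specialize Proposition \ref{prop:2.4} to the case $\si^{'}=\si$. With this choice its three hypotheses read: $\si$ commutes with $\si$ (trivially true); $f$ commutes with $\si$; and $g$ commutes with $\si$. The latter two hold verbatim by the standing assumption that $\si$ commutes with every element of $\CDer_{\si}(\R)$. Proposition \ref{prop:2.4} therefore yields $[f_{\l}g]_{\mu}\in\CDer_{\si\si}(\R)=\CDer_{\si^{2}}(\R)$.

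Now the involutivity of $\si$, i.e.\ $\si^{2}=\id_{\R}$, enters: it collapses $\CDer_{\si^{2}}(\R)=\CDer_{\id_{\R}}(\R)=\CDer(\R)$. One sees this concretely from the last displayed identity in the proof of Proposition \ref{prop:2.4}, where the associated automorphism appearing on the right is $\si(\si(b))=\si^{2}(b)=b$, so $[f_{\l}g]_{\mu}$ obeys the plain conformal derivation rule.

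The step I expect to be the main obstacle is exactly this final reconciliation. Proposition \ref{prop:2.4} only lands the bracket in $\CDer(\R)$, whereas genuine closure requires it to lie back in $\CDer_{\si}(\R)=\CDer_{\si,\id_{\R}}(\R)$; the two spaces differ in general by the term $[(\,\cdot\,)_{\l+\mu}((\si-\id_{\R})(b))]$, so one cannot freely swap $\id_{\R}$ for $\si$ without control on $(\si-\id_{\R})(\R)$. I would try to resolve this via the inclusion $\CDer(\R)\subseteq\CDer_{G}(\R)$ recorded after the definition of conformal $G$-derivations, applied to $G=\{\id_{\R},\si\}$, taking care to check that this inclusion refines from the full space $\CDer_{G}(\R)$ down to the single-automorphism space $\CDer_{\si}(\R)$; should that refinement fail, the clean remedy is to add the hypothesis $(\si-\id_{\R})(\R)\subseteq\Z(\R)$, since by Proposition \ref{prop:2.3} this already forces $\CDer_{\si}(\R)=\CDer(\R)$ and makes the closure immediate. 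Once $[f_{\l}g]_{\mu}\in\CDer_{\si}(\R)$ is secured, the inherited axioms complete the argument.
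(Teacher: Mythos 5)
Your overall route is the same as the paper's: observe that $\CDer_{\si}(\R)$ is a $\mathbb{C}[\partial]$-submodule of $\Cend(\R)$, so only closure under the $\l$-bracket is at issue, and obtain the bracket identity by running the computation of Proposition \ref{prop:2.4} with $\si^{'}=\si$, whose hypotheses are supplied verbatim by the assumption that $\si$ commutes with every element of $\CDer_{\si}(\R)$. Up to this point the two arguments coincide, and both correctly place $[f_{\l}g]_{\mu}$ in $\CDer_{\si^{2}}(\R)$.

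The obstacle you then flag is not a defect of your write-up; it is a genuine gap, and it is exactly the point where the paper's own proof breaks down. Since $\si$ is involutive, $\si^{2}=\id_{\R}$, so the computation yields $[f_{\l}g]_{\mu}\in\CDer(\R)[\l]$, whereas closure needs $[f_{\l}g]_{\mu}\in\CDer_{\si}(\R)[\l]=\CDer_{\si,\id_{\R}}(\R)[\l]$; the two memberships differ by the vanishing of $[([f_{\l}g]_{\mu}(a))_{\mu+\g}((\si-\id_{\R})(b))]$, which nothing in the hypotheses forces. The paper leaps over this by writing ``Since $\si^{2}=\si$'', which is not what involutive means --- and an automorphism with $\si^{2}=\si$ is necessarily $\id_{\R}$, which would make the corollary vacuous. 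Your first proposed repair indeed fails, as you suspected: the inclusion $\CDer(\R)\subseteq\CDer_{G}(\R)$ only realizes an ordinary conformal derivation as an $(\id_{\R},\id_{\R})$-derivation inside $\CDer_{G}(\R)$, whose definition requires just \emph{some} pair of associated automorphisms in $G$, and it gives no membership in the single-pair space $\CDer_{\si,\id_{\R}}(\R)$. Your second repair --- adding the hypothesis $(\si-\id_{\R})(\R)\subseteq\Z(\R)$ and invoking Proposition \ref{prop:2.3} so that $\CDer_{\si}(\R)=\CDer(\R)$ --- is sound, but it proves a statement with strictly stronger hypotheses than the corollary claims. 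What the shared computation actually establishes is the pattern of Proposition \ref{prop:3.3}: brackets of $\si$-parts land in the $\si^{2}$-part, so for an involution one gets closure for $\CDer_{\id_{\R}}^{-}(\R)\oplus\CDer_{\si}^{-}(\R)$, not for $\CDer_{\si}(\R)$ alone. In short: you follow the same approach as the paper, but your diagnosis of the final step is correct, and the paper's proof as written does not establish the closure.
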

\begin{proof}
Note that $\CDer_{\it{\si}}(\R)$ is a $\mathbb{C}[\partial]$-module, thus it suffices to verify that $\CDer_{\it{\si}}(\R)$ is closed under the $\l$-bracket, that is
\[[f_{\l}g]_{\mu} = f_{\l}g_{\mu-\l}-(-1)^{|f||g|}g_{\mu-\l}f_{\l}\in\CDer_{\it{\si}}(\R), \quad\forall f, g \in \CDer_{\it{\si}}(\R).\]

With a similar discussion as that in the proof of Proposition \ref{prop:2.4}, we have
$$f_{\l}(g_{\mu-\l}([a_{\g}b]))=[([f_{\l}g]_{\mu}(a))_{\mu+\g}\si^{2}(b)]
+(-1)^{|a|(|f|+|g|)}[a_{\g}([f_{\l}g]_{\mu}(b))],\quad\forall a, b \in \R.$$
Since $\si^{2}=\si$, it follows that $[f_{\l}g]_{\mu}\in\CDer_{\rm{\si}}(\R)[\l]$. Therefore, $\CDer_{\it{\si}}(\R)$ is a Lie conformal superalgebra.
\end{proof}

\begin{prop}\label{prop:2.6}
Suppose that $\si, \t$ are two elements in $G$ and $d\in\CDer_{\it{\si}}(\R)$. Then
$\t d\in\CDer_{\it{\t}\it{\si},\it{\t}}(\R)$ and $d\t\in\CDer_{\it{\si}\it{\t},\it{\t}}(\R)$.
\end{prop}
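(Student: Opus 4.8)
The plan is to exploit directly that $\t$ is an automorphism of $\R$, hence a degree-$\bar{0}$, $\mathbb{C}[\pr]$-linear map that preserves every $\l$-bracket and commutes with $\pr$, and then to feed the defining identity of $\CDer_{\it{\si}}(\R)$ into the computation. First I would confirm that $\t d$ and $d\t$ are genuine elements of $\Cend(\R)$: setting $(\t d)_\l := \t d_\l$ and $(d\t)_\l := d_\l\t$, the conformal linearity relation $f_\l\pr=(\pr+\l)f_\l$ is inherited from $d$ precisely because $\t\pr=\pr\t$. Along the way I would record that, since $\t$ preserves the $\mathbb{Z}_2$-grading, it has degree $\bar{0}$, so $|\t d|=|d\t|=|d|$; this is exactly what will let the signs match at the end.

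For the first claim I would compute $(\t d)_\l([a_\mu b])=\t\big(d_\l([a_\mu b])\big)$ and substitute the identity $d_\l([a_\mu b])=[(d_\l(a))_{\l+\mu}(\si(b))]+(-1)^{|a||d|}[a_\mu(d_\l(b))]$ coming from $d\in\CDer_{\it{\si}}(\R)$. Applying $\t$ and using that $\t$ is a homomorphism, so that $\t[x_\nu y]=[(\t x)_\nu(\t y)]$, each bracket transports through $\t$, producing $[(\t d_\l(a))_{\l+\mu}(\t\si(b))]+(-1)^{|a||d|}[(\t a)_\mu(\t d_\l(b))]$. Rewriting $\t d_\l=(\t d)_\l$, reading $\t\si$ as the automorphism $\t\si$, and replacing $|d|$ by $|\t d|$ yields exactly the defining identity of $\CDer_{\it{\t}\it{\si},\it{\t}}(\R)$, whence $\t d\in\CDer_{\it{\t}\it{\si},\it{\t}}(\R)$.

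For the second claim the order of operations is reversed: I would first push $\t$ through the bracket, $(d\t)_\l([a_\mu b])=d_\l\big(\t[a_\mu b]\big)=d_\l\big([(\t a)_\mu(\t b)]\big)$, and then apply the $\CDer_{\it{\si}}(\R)$ identity to the arguments $\t a,\t b$. This gives $[(d_\l\t a)_{\l+\mu}(\si\t b)]+(-1)^{|\t a||d|}[(\t a)_\mu(d_\l\t b)]$; since $|\t a|=|a|$ and $|d\t|=|d|$, this is precisely the defining identity of $\CDer_{\it{\si}\it{\t},\it{\t}}(\R)$, so $d\t\in\CDer_{\it{\si}\it{\t},\it{\t}}(\R)$.

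There is no serious obstacle here: the content is entirely the automorphism property of $\t$ together with the bookkeeping of signs. The only points requiring a little care are confirming that the degree of $\t d$ and of $d\t$ equals that of $d$, so that $(-1)^{|a||d|}$ may legitimately be read as $(-1)^{|a||\t d|}$ and $(-1)^{|a||d\t|}$ respectively, and verifying in the second claim that substituting $\t a,\t b$ into the $\CDer_{\it{\si}}(\R)$ identity is valid for all homogeneous $a,b$, which holds since $\t$ is a bijective, degree-preserving map.
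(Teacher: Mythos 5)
Your proof is correct and follows essentially the same route as the paper: apply $\t$ to the defining identity of $\CDer_{\si}(\R)$ and pass it through the brackets via the homomorphism property for the first claim, and push $\t$ inside the bracket first for the second (which the paper dispatches with ``similarly''). Your added checks that $\t d, d\t \in \Cend(\R)$ and that $|\t d|=|d\t|=|d|$ are details the paper leaves implicit, but they do not change the argument.
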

\begin{proof}
For any $a, b \in \R$, we have
\begin{align*}
&~~~~\t d_\l([a_\mu b])=\t([(d_\l(a))_{\l+\mu}(\si(b))]+(-1)^{|a||d|}[a_\mu (d_\l(b))])\\
&=\t([(d_\l(a))_{\l+\mu}(\si(b))])+(-1)^{|a||d|}\t([a_\mu (d_\l(b))])\\
&=[(\t(d_\l(a)))_{\l+\mu}(\t(\si(b)))]+(-1)^{|a||d|}[(\t(a))_\mu (\t(d_\l(b)))],
\end{align*}
which means that
$\t d\in\CDer_{\it{\t}\it{\si},\it{\t}}(\R)$.

Similarly, we can obtain that $d\t\in\CDer_{\it{\si}\it{\t},\it{\t}}(\R)$.
\end{proof}

\begin{prop}\label{prop:2.7}
Suppose that $\si, \t$ are two elements in $G$ such that $c - \si^{-1}\t(c) \notin \rm{Z}_{\it{c}}(\R)$ for any nonzero element $c$ in $\R$. Then $\CDer_{\it{\si}}(\R) \cap \CDer_{\it{\t}}(\R) = \{\rm{0}\}$.
\end{prop}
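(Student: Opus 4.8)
The plan is to show that any $d$ lying in both $\CDer_\si(\R)$ and $\CDer_\t(\R)$ must vanish, by exploiting the mismatch between the two associated automorphisms. First I would write down the two defining identities that such a $d$ satisfies. Since $\CDer_\si(\R)=\CDer_{\si,\id_\R}(\R)$ and $\CDer_\t(\R)=\CDer_{\t,\id_\R}(\R)$, for all $a,b\in\R$ both
\[d_\l([a_\mu b])=[(d_\l(a))_{\l+\mu}(\si(b))]+(-1)^{|a||d|}[a_\mu(d_\l(b))]\]
and the same identity with $\si$ replaced by $\t$ hold. Subtracting the second from the first cancels the left-hand side and the final term, leaving the key relation
\[[(d_\l(a))_{\l+\mu}((\si-\t)(b))]=0,\qquad\forall\,a,b\in\R.\]

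Next I would recast this so that the hypothesis on $c-\si^{-1}\t(c)$ becomes applicable. Because $G$ is a subgroup of $\rm{Aut}(\R)$, the inverse $\si^{-1}$ is again an automorphism; it therefore commutes with the $\l$-bracket and with the $\mathbb{C}[\pr]$-action. Applying $\si^{-1}$ and using $\si^{-1}(\si-\t)(b)=b-\si^{-1}\t(b)$ yields
\[[(\si^{-1}(d_\l(a)))_{\l+\mu}(b-\si^{-1}\t(b))]=0,\qquad\forall\,a,b\in\R.\]

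The one point that needs care is that $d_\l(a)$ carries a $\l$-dependence, whereas the hypothesis is phrased for genuine elements $c\in\R$; so I would extract coefficients before invoking it. Writing $\si^{-1}(d_\l(a))=\sum_{k}\frac{\l^{k}}{k!}\,c_{k}$ with $c_{k}=\si^{-1}(d_{(k)}(a))\in\R$, and regarding $\l$ and $\l+\mu$ as algebraically independent indeterminates, the vanishing of the displayed expression forces $[(c_{k})_{\l+\mu}(b-\si^{-1}\t(b))]=0$ for every $k$ and every $b$; that is, $b-\si^{-1}\t(b)\in\Z_{c_{k}}(\R)$ for all $b$. Finally I would specialize $b=c_{k}$: if some $c_{k}\neq0$, then $c_{k}-\si^{-1}\t(c_{k})\in\Z_{c_{k}}(\R)$, contradicting the standing hypothesis that $c-\si^{-1}\t(c)\notin\Z_{c}(\R)$ for every nonzero $c$. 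Hence every $c_{k}=0$, so $d_{(k)}(a)=0$ for all $k$ (as $\si^{-1}$ is injective), whence $d_\l(a)=0$; since $a$ was arbitrary, $d=0$ and $\CDer_\si(\R)\cap\CDer_\t(\R)=\{0\}$.

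The main obstacle, and really the only delicate point, is the coefficient extraction in the last paragraph: one must check that the polynomial identity in $\l$ and $\mu$ genuinely yields a separate centralizer condition for each coefficient $c_{k}$, so that the element-wise hypothesis can be applied to a fixed nonzero $c_{k}\in\R$. Everything else is a routine manipulation of the two derivation identities together with the fact that $\si^{-1}$ is an automorphism.
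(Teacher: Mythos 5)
Your proposal is correct and follows essentially the same route as the paper's own proof: subtract the two derivation identities to get $[(d_\l(a))_{\l+\mu}((\si-\t)(b))]=0$, apply $\si^{-1}$ to rewrite this as $[(\si^{-1}(d_\l(a)))_{\l+\mu}(b-\si^{-1}\t(b))]=0$, and then specialize $b$ to (the image under $\si^{-1}$ of) $d$ applied to a suitable element so as to contradict the centralizer hypothesis. The one place you diverge is in fact an improvement in rigor: the paper substitutes the $\l$-dependent quantity $b_0=\si^{-1}d_\l(a_0)$ directly into a hypothesis stated only for genuine elements $c\in\R$, whereas your extraction of the coefficients $c_k=\si^{-1}(d_{(k)}(a))\in\R$ (justified by the algebraic independence of $\l$ and $\l+\mu$) applies the hypothesis to honest elements of $\R$ and then recovers $d_\l(a)=0$ coefficientwise, which is the clean way to close the argument.
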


\begin{proof}
Assume that there exists a nonzero element $d\in\CDer_{\it{\si}}(\R) \cap \CDer_{\it{\t}}(\R)$. Then there exists an element $a_{0} \in \R$ such that $d_{\l}(a_{0}) \neq 0$.
So we have
$$[(d_{\l}(a))_{\l+\mu}(\si(b))]
=[(d_{\l}(a))_{\l+\mu}(\t(b))],\quad\forall a,b \in \R,$$
which implies that
\[[(\si^{-1}(d_{\l}(a)))_{\l+\mu}(b-\si^{-1}\t(b))] = 0.\]
If we take $a=a_{0}$ and $b = b_{0} := \si^{-1}d_{\l}(a_{0})$,
then we can get
\[[b_{0_{\l+\mu}}(b_{0}-\si^{-1}\t(b_{0}))] = 0,\]
which means that $b_{0}-\si^{-1}\t(b_{0}) \in \rm{Z}_{\it{b_{\rm{0}}}}(\R)$. Since $c - \si^{-1}\t(c) \notin \rm{Z}_{\it{c}}(\R)$ with $c\in\R$ and $c\neq 0$, we have $b_{0}=0$. According to the assumption, $d_{\l}(a_{0}) \neq 0$ and $\si^{-1}$ is an isomorphism, we have $b_{0} = \si^{-1}d_{\l}(a_{0}) \neq 0$, which is a contradiction.
\end{proof}

\begin{prop}\label{prop:2.8}
Suppose that $d \in \CDer_{\it{\si}}(\R)$ is an element such that $(d_{\l}\si-\si d_{\l})(\R) \subseteq \rm{Z}(\R)$. Then $[\R_{\l}\R]$ is contained in the kernel of $d_{\l}\si-\si d_{\l}$.
\end{prop}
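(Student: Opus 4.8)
The plan is to work directly with the operator $D_{\l} := d_{\l}\si - \si d_{\l}$, acting as $x \mapsto d_{\l}(\si(x)) - \si(d_{\l}(x))$, and to show that $D_{\l}([a_{\mu}b]) = 0$ for all $a, b \in \R$; since $[\R_{\l}\R]$ is the $\mathbb{C}[\pr]$-span of the brackets $[a_{\mu}b]$, this is exactly the assertion that $[\R_{\l}\R]$ lies in the kernel of $d_{\l}\si - \si d_{\l}$. The only two inputs I expect to need are that $\si$ is a homomorphism of Lie conformal superalgebras (so it commutes with the $\l$-bracket and preserves parity) and the defining identity of a conformal $\si$-derivation: since $d \in \CDer_{\si}(\R) = \CDer_{\si, \id_{\R}}(\R)$,
\[
d_{\l}([a_{\mu}b]) = [(d_{\l}(a))_{\l+\mu}(\si(b))] + (-1)^{|a||d|}[a_{\mu}(d_{\l}(b))], \quad \forall a, b \in \R.
\]

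First I would expand $d_{\l}(\si([a_{\mu}b]))$. Using that $\si$ is a homomorphism we have $\si([a_{\mu}b]) = [(\si(a))_{\mu}(\si(b))]$, and then applying the $\CDer_{\si}$ identity to the pair $(\si(a), \si(b))$, together with $|\si(a)| = |a|$, gives
\[
d_{\l}(\si([a_{\mu}b])) = [(d_{\l}(\si(a)))_{\l+\mu}(\si^{2}(b))] + (-1)^{|a||d|}[(\si(a))_{\mu}(d_{\l}(\si(b)))].
\]
Next I would expand $\si(d_{\l}([a_{\mu}b]))$ in the opposite order: apply the $\CDer_{\si}$ identity to $(a,b)$ first, then push $\si$ through the bracket, obtaining
\[
\si(d_{\l}([a_{\mu}b])) = [(\si(d_{\l}(a)))_{\l+\mu}(\si^{2}(b))] + (-1)^{|a||d|}[(\si(a))_{\mu}(\si(d_{\l}(b)))].
\]

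Subtracting these two expressions, the $\si^{2}(b)$-terms and the $\si(a)$-terms pair up slot-by-slot, and bilinearity of the bracket lets me collect
\[
D_{\l}([a_{\mu}b]) = [(D_{\l}(a))_{\l+\mu}(\si^{2}(b))] + (-1)^{|a||d|}[(\si(a))_{\mu}(D_{\l}(b))],
\]
where I have recognised $d_{\l}(\si(a)) - \si(d_{\l}(a)) = D_{\l}(a)$ and likewise for $b$. Finally, the hypothesis $(d_{\l}\si - \si d_{\l})(\R) \subseteq \Z(\R)$ says precisely that $D_{\l}(a), D_{\l}(b) \in \Z(\R)$; since every $\l$-bracket with a central element vanishes, both surviving terms are $0$, whence $D_{\l}([a_{\mu}b]) = 0$, as desired.

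I do not anticipate a genuine obstacle: the argument is a two-sided expansion followed by cancellation. The only points demanding care are bookkeeping—keeping the composition convention $d_{\l}\si = d_{\l}\circ\si$ consistent, carrying the sign $(-1)^{|a||d|}$ correctly, and using that the automorphisms in $G$ are parity-preserving so that $|\si(a)| = |a|$ and $\si$ passes cleanly through the $\l$-bracket (this is what turns the second slot of the first bracket into $\si^{2}(b)$ and lets the central terms be identified). If one prefers to argue with the $n$-products instead of the $\l$-bracket, the same computation goes through verbatim, using axiom $(\rm C3)$ in place of the Jacobi identity.
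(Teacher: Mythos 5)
Your proof is correct and follows essentially the same route as the paper's: expand $d_{\l}\si$ and $\si d_{\l}$ on a bracket $[a_{\mu}b]$ in the two orders (homomorphism first versus derivation identity first), subtract to obtain $(d_{\l}\si-\si d_{\l})([a_{\mu}b])=[((d_{\l}\si-\si d_{\l})(a))_{\l+\mu}(\si^{2}(b))]+(-1)^{|a||d|}[(\si(a))_{\mu}((d_{\l}\si-\si d_{\l})(b))]$, and kill both terms using centrality. In fact your sign $(-1)^{|a||d|}$ is the one consistent with the paper's definition of a conformal $(\si,\t)$-derivation, whereas the paper's own proof carries a typographical $(-1)^{|a|+|d|}$.
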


\begin{proof}
For any $a,b\in\R$, we have
\[d_{\l}(\si([a_{\mu}b]))
=d_{\l}([(\si(a))_{\mu}(\si(b))])
=[(d_{\l}(\si(a)))_{\l+\mu}(\si^{2}(b))]
+(-1)^{|a|+|d|}[(\si(a))_{\mu}(d_{\l}(\si(b)))],\]
and\begin{align*}
&~~~~\si (d_{\l}([a_{\mu}b]))=\si([(d_{\l}(a))_{\l+\mu}(\si(b))])
+(-1)^{|a|+|d|}[a_{\mu}(d_{\l}(b))]\\
&=[(\si (d_{\l}(a)))_{\l+\mu}(\si^{2}(b))]
+(-1)^{|a|+|d|}[(\si(a))_{\mu}(\si (d_{\l}(b)))].
\end{align*}
Since $(d_{\l}\si-\si d_{\l})(\R) \subseteq \rm{Z}(\R)$, we have
\begin{align*}
&~~~~(d_{\l}\si-\si d_{\l})([a_{\mu}b])=d_{\l}\si([a_{\mu}b])-\si d_{\l}([a_{\mu}b])\\ &=[d_{\l}(\si(a))_{\l+\mu}(\si^{2}(b))]
+(-1)^{|a|+|d|}[(\si(a))_{\mu}(d_{\l}(\si(b)))]\\
&~~~~-[(\si(d_{\l}(a)))_{\l+\mu}(\si^{2}(b))]
-(-1)^{|a|+|d|}[(\si(a))_{\mu}(\si(d_{\l}(b)))]\\
&=[((d_{\l}\si-\si d_{\l})(a))_{\l+\mu}(\si^{2}(b))]
+(-1)^{|a|+|d|}[(\si(a))_{\mu}((d_{\l}\si-\si d_{\l})(b))]\\
&=0.
\end{align*}
Consequently, $[\R_{\l}\R]$ is contained in the kernel of $d_{\l}\si-\si d_{\l}$.
\end{proof}

\section{The interiors of conformal $G$-derivations}
\def\theequation{\arabic{section}.\arabic{equation}}
\setcounter{equation} {0}

In this section, we will investigate the structures of $\CDer_{\si}(\R)$ and $\CDer_{G}(\R)$. To understand this, we focus on a special class of $\CDer_{\si}(\R)$ called the \emph{interiors} of $G$-derivations, $\CDer_{G}^{\star}(\R)$. In addition, we study the rationality of the Hilbert series for the direct sum of these interiors of conformal $G$- derivations when $G$ is a cyclic subgroup.

Set
\[\CDer_{\it{\si}}^{+}(\R) = \{d \in \CDer_{\it{\si}}(\R)\;|\;d_{\l}\si = \si d_{\l}\},\;\CDer_{\it{\si}}^{-}(\R) = \{d \in \CDer_{\it{\si}}(\R)\;|\;d_{\l}\t = \t d_{\l},\;\forall \t \in G\}.\]
It is obvious that
$\CDer_{\it{\si}}^{-}(\R) \subseteq \CDer_{\it{\si}}^{+}(\R) \subseteq \CDer_{\it{\si}}(\R)$
and they are all $\mathbb{C}[\partial]$-modules. We now consider some kind of "sum" of them respectively and observe how close from these sum to $\CDer_{G}(\R)$.

Define
\[\CDer_{G}^{+}(\R) := \oplus_{\si \in G}\CDer_{\it{\si}}^{+}(\R),\;\CDer_{G}^{-}(\R) := \oplus_{\si \in G}\CDer_{\it{\si}}^{-}(\R),\]
called the \emph{big interior} and the \emph{small interior} of $\CDer_{G}(\R)$ respectively. Besides, we may define
\[\CDer_{G}^{\star}(\R) := \oplus_{\si \in G}\CDer_{\it{\si}}(\R),\]
called the \emph{interior} of $\CDer_{G}(\R)$. Obviously, we have
$\CDer_{G}^{-}(\R) \subseteq \CDer_{G}^{+}(\R) \subseteq \CDer_{G}^{\star}(\R)$.

\begin{exa}\label{example:3.1}
Let $G = \{\rm{id}_{\R}\}$, the trivial group. Since $\CDer_{G}^{-}(\R) = \CDer_{G}^{+}(\R)$, we have $\CDer_{G}^{-}(\R) = \CDer_{G}^{+}(\R) = \CDer_{G}^{\star}(\R) = \CDer_{G}(\R) = \CDer(\R)$.
\end{exa}

\begin{exa}\label{example:3.2}
 Let $G$ is a cyclic group and $\si$ is its generator. If $* \in \{-, +, \star\}$, then
\[\CDer_{\it{G}}^{*}(\R) = \CDer_{\langle\si\rangle}^{*}(\R) = \oplus_{k \in \mathbb{Z}}\CDer_{\si^{\it{k}}}^{*}(\R)\]
where $\si^{0} = \rm{id}_{\R}$, $\si^{1} = \si$ and $\si^{k} = \si^{k - 1}\si$. For convenience, we denote $\CDer_{\si^{\it{k}}}^{\star}(\R)$ by $\CDer_{\si^{\it{k}}}(\R)$. In this case, $\CDer_{\langle\si\rangle}^{*}(\R)$ is a $\mathbb{Z}$-graded $\mathbb{C}[\partial]$-module and recall that the \emph{Hilbert series} of $\CDer_{\langle\si\rangle}^{*}(\R)$ is defined as follow:
\[H(\CDer_{\langle\si\rangle}^{*}(\R), t) := \sum_{k \in \mathbb{Z}}^{}\rm{rank}(\CDer_{\si^{\it{k}}}^{*}(\R))\it{t}^{\it{k}}.\]
If $\si$ is of finite order, then $H(\CDer_{\langle\si\rangle}^{*}(\R), t)$ is a polynomial function in $\mathbb{Z}[t]$.
\end{exa}

\begin{prop}\label{prop:3.3}
Suppose that $G$ is an abelian group. Then $\CDer_{G}^{-}(\R)$ is a Lie conformal superalgebra with the $\l$-bracket $[-_{\l}-]$.
\end{prop}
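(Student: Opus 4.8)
The plan is to realize $\CDer_{G}^{-}(\R)$ as a $G$-graded $\mathbb{C}[\partial]$-submodule of the general linear Lie conformal superalgebra $gc(\R)=\Cend(\R)$ and to prove the single closure statement
\[[\CDer_{\it{\si}}^{-}(\R)_{\l}\,\CDer_{\it{\si'}}^{-}(\R)]_{\mu}\subseteq\CDer_{\it{\si}\it{\si'}}^{-}(\R),\qquad\forall\,\si,\si'\in G.\]
Since each $\CDer_{\it{\si}}^{-}(\R)$ is a $\mathbb{C}[\partial]$-module and $\partial$ preserves the $G$-grading, the $\mathbb{C}[\partial]$-module structure on the direct sum is immediate. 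Once closure is established, the three axioms $(\rm C1)_{\l}$--$(\rm C3)_{\l}$ for $[-_{\l}-]$ need not be reproved: on homogeneous elements the graded bracket coincides with the $gc(\R)$-bracket, so conformal sesquilinearity, skew-supersymmetry and the Jacobi identity are inherited from $gc(\R)$ and then extended bilinearly. Thus the entire content is the closure statement, which I would carry out in two steps.

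First I would land the bracket in $\CDer_{\it{\si}\it{\si'}}(\R)$ by invoking Proposition \ref{prop:2.4}. Fix $f\in\CDer_{\it{\si}}^{-}(\R)$ and $g\in\CDer_{\it{\si'}}^{-}(\R)$. Its three hypotheses all hold here: $\si$ and $\si'$ commute because $G$ is abelian; and $f$ commutes with $\si'$ while $g$ commutes with $\si$, since by the very definition of the small interior $f$ and $g$ commute with \emph{every} element of $G$. Hence Proposition \ref{prop:2.4} yields $[f_{\l}g]_{\mu}\in\CDer_{\it{\si}\it{\si'}}(\R)$, and note $\si\si'\in G$ so this is a genuine homogeneous component.

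The remaining, and only genuinely new, step is to upgrade membership from $\CDer_{\it{\si}\it{\si'}}(\R)$ to $\CDer_{\it{\si}\it{\si'}}^{-}(\R)$, i.e. to check that $[f_{\l}g]_{\mu}$ commutes with every $\t\in G$. Using the bracket $[f_{\l}g]_{\mu}=f_{\l}g_{\mu-\l}-(-1)^{|f||g|}g_{\mu-\l}f_{\l}$ of \eqref{def2-5} together with the defining relations $f_{\l}\t=\t f_{\l}$ and $g_{\mu-\l}\t=\t g_{\mu-\l}$, one slides $\t$ through both composition terms, $f_{\l}g_{\mu-\l}\t=\t f_{\l}g_{\mu-\l}$ and $g_{\mu-\l}f_{\l}\t=\t g_{\mu-\l}f_{\l}$, to conclude $[f_{\l}g]_{\mu}\t=\t[f_{\l}g]_{\mu}$. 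This gives $[f_{\l}g]_{\mu}\in\CDer_{\it{\si}\it{\si'}}^{-}(\R)$, establishing closure, and therefore $\CDer_{G}^{-}(\R)$ is a Lie conformal superalgebra under $[-_{\l}-]$. I expect the main obstacle to be conceptual rather than computational: one must not stop at Proposition \ref{prop:2.4}, which only places the bracket in $\CDer_{\it{\si}\it{\si'}}(\R)$, but recognize that remaining inside the \emph{small} interior demands the separate commuting verification above. This is precisely where the stronger defining condition of $\CDer_{\it{\si}}^{-}(\R)$ (commuting with all of $G$, not merely with $\si$) is indispensable, and where the abelianness of $G$ enters through the first hypothesis of Proposition \ref{prop:2.4}.
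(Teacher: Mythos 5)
Your proposal is correct and follows essentially the same route as the paper's own proof: reduce everything to closure of the homogeneous components under the $\lambda$-bracket (the conformal superalgebra axioms being inherited from $gc(\R)$), land $[f_{\lambda}g]_{\mu}$ in $\CDer_{\sigma\sigma'}(\R)$, and then observe that it commutes with every element of $G$, hence lies in $\CDer_{\sigma\sigma'}^{-}(\R)$. The only difference is cosmetic: the paper re-derives the closure computation by hand (duplicating, nearly verbatim, its own proof of Proposition \ref{prop:2.4}), whereas you invoke Proposition \ref{prop:2.4} directly and spell out the commutation check the paper dismisses as obvious --- a legitimate streamlining, since the hypotheses of that proposition hold automatically for elements of the small interior once $G$ is abelian.
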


\begin{proof}
Since $\CDer_{G}^{-}(\R)$ is a $\mathbb{C}[\partial]$-module, it is sufficient to show that $\CDer_{G}^{-}(\R)$ is closed under the $\l$-bracket. For any $f\in\CDer_{\si}^{-}(\R)$ and $g\in\CDer_{\t}^{-}(\R)$ with $\si,\t\in G$ and $a,b\in \R$, we have
\begin{align*}
&~~~~f_{\l}(g_{\mu-\l}([a_{\g}b]))
=f_{\l}([(g_{\mu-\l}(a))_{\mu-\l+\g}(\t(b))]
+(-1)^{|a||g|}[a_{\g}(g_{\mu-\l}(b))])\\
&=[(f_{\l}(g_{\mu-\l}(a)))_{\mu+\g}(\si(\t(b)))]
+(-1)^{(|a|+|g|)|f|}[(g_{\mu-\l}(a))_{\mu-\l+\g}(f_{\l}(\t(b)))]\\
&~~~~+(-1)^{|a||g|}[(f_{\l}(a))_{\l+\g}(\si (g_{\mu-\l}(b)))]
+(-1)^{|a||g|+|a||f|}[a_{\g}(f_{\l}(g_{\mu-\l}(b)))],
\end{align*}
and
\begin{align*}
&~~~~g_{\mu-\l}(f_{\l}([a_{\g}b]))
=g_{\mu-\l}([(f_{\l}(a))_{\l+\g}(\si(b))]
+(-1)^{|a||f|}[a_{\g}(f_{\l}(b))])\\
&=[(g_{\mu-\l}(f_{\l}(a)))_{\mu+\g}(\t(\si(b)))]
+(-1)^{(|a|+|f|)|g|}[(f_{\l}(a))_{\l+\g}(g_{\mu-\l}(\si(b)))]\\
&~~~~+(-1)^{|a||f|}[(g_{\mu-\l}(a))_{\mu-\l+\g}(\t (f_{\l}(b)))]
+(-1)^{|a||f|+|a||g|}[a_{\g}(g_{\mu-\l}(f_{\l}(b)))].
\end{align*}
Since $f_{\l}\t=\t f_{\l}$, $g_{\mu-\l}\si=\si g_{\mu-\l}$ and $G$ is abelian, we have
\begin{align*}
&~~~~[f_{\l}g]_{\mu}([a_{\g}b])
=(f_{\l}g_{\mu-\l}-(-1)^{|f||g|}g_{\mu-\l}f_{\l})([a_{\g}b])\\
&=[(f_{\l}(g_{\mu-\l}(a)))_{\mu+\g}(\si(\t(b)))]
+(-1)^{(|a|+|g|)|f|}[(g_{\mu-\l}(a))_{\mu-\l+\g}(f_{\l}(\t(b)))]\\
&~~~~+(-1)^{|a||g|}[(f_{\l}(a))_{\l+\g}(\si (g_{\mu-\l}(b)))]
+(-1)^{|a||g|+|a||f|}[a_{\g}(f_{\l}(g_{\mu-\l}(b)))]\\
&~~~~-(-1)^{|f||g|}[(g_{\mu-\l}(f_{\l}(a)))_{\mu+\g}(\t(\si(b)))]
-(-1)^{|a||g|}[(f_{\l}(a))_{\l+\g}(g_{\mu-\l}(\si(b)))]\\
&~~~~-(-1)^{|a||f|+|f||g|}[(g_{\mu-\l}(a))_{\mu-\l+\g}(\t (f_{\l}(b)))]
-(-1)^{|f||g|+|a||f|+|a||g|}[a_{\g}(g_{\mu-\l}(f_{\l}(b)))].\\
&=[(f_{\l}g_{\mu-\l}-(-1)^{|f||g|}g_{\mu-\l}f_{\l})(a)_{\mu+\g}(\si(\t(b)))]
+[a_{\g}((f_{\l}g_{\mu-\l}-(-1)^{|f||g|}g_{\mu-\l}f_{\l})(b))]\\
&=[([f_{\l}g]_{\mu}(a))_{\mu+\g}(\si(\t(b)))]+[a_{\g}([f_{\l}g]_{\mu}(b))],
\end{align*}
which implies that $[f_{\l}g]_{\mu}\in\CDer_{\si\t}(\R)[\l]$. Obviously, $[f_{\l}g]_{\mu}$ commutes with every element in $G$, and so $[f_{\l}g]_{\mu}\in\CDer_{\si\t}^{-}(\R)[\l]\subseteq\CDer_{G}^{-}(\R)[\l]$. Consequently, $\CDer_{G}^{-}(\R)$ is a Lie conformal superalgebra.
\end{proof}


According to the above results, we can see that $\CDer_{G}(\R)$ may be very large and complicated. In the following, we will focus on the interiors of $\CDer_{G}(\R)$ where $G$ is an infinite cyclic group. Particularly, we will investigate the
important invariant, the Hilbert series, which encodes the ranks of submodules into an infinite series.

\begin{prop}\label{prop:3.4}
Let $G=\langle\si\rangle$ be an infinite cyclic group. If there exists $l_{0} \in \mathbb{N}^{+}$ and $d \in \CDer_{\si^{\it{l_{\rm{0}}}}}(\R)$ such that $\f_{d}$ is invertible restricted to $\CDer_{\si^{\it{i}}}(\R)$ for all $i \in \mathbb{Z}\setminus\{l_{0}\}$, then $H(\CDer_{\it{G}}^{-}(\R), t)$ is a rational function.
\end{prop}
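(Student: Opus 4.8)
The plan is to read $\f_d$ as the inner action $g\mapsto[d_\l g]$ of the fixed element $d\in\CDer_{\si^{l_0}}(\R)$, and to use that this operator is homogeneous of degree $l_0$ on the $\mathbb{Z}$-graded module $\CDer_G^-(\R)=\oplus_{k\in\mathbb{Z}}\CDer_{\si^{k}}^-(\R)$. Since $G=\langle\si\rangle$ is abelian, Proposition \ref{prop:3.3} guarantees that the small interior is closed under the $\l$-bracket, and inspecting that argument shows more precisely that $[d_\l g]\in\CDer_{\si^{i+l_0}}^-(\R)$ whenever $g\in\CDer_{\si^{i}}^-(\R)$. Hence $\f_d$ restricts to a $\mathbb{C}[\partial]$-linear map $\f_d:\CDer_{\si^{i}}^-(\R)\lr\CDer_{\si^{i+l_0}}^-(\R)$ for every $i\in\mathbb{Z}$, i.e. a degree-$l_0$ endomorphism of $\CDer_G^-(\R)$.

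Writing $r_k=\mathrm{rank}\,\CDer_{\si^{k}}^-(\R)$, so that $H(\CDer_G^-(\R),t)=\sum_{k\in\mathbb{Z}}r_kt^{k}$, I would next convert the invertibility hypothesis into rank equalities. For every $i\neq l_0$ the restriction of $\f_d$ to the degree-$i$ piece is invertible, hence a $\mathbb{C}[\partial]$-module isomorphism $\CDer_{\si^{i}}^-(\R)\cong\CDer_{\si^{i+l_0}}^-(\R)$; since isomorphic $\mathbb{C}[\partial]$-modules have the same rank, this yields
\[r_i=r_{i+l_0}\qquad\text{for all }i\in\mathbb{Z}\setminus\{l_0\}.\]
Thus the bi-infinite rank sequence $(r_k)$ is $l_0$-periodic except possibly at the single link between the indices $l_0$ and $2l_0$, which the hypothesis does not control (note $l_0\in\mathbb{N}^+$ guarantees $0\neq l_0\neq 2l_0$).

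The last step is a telescoping identity. Multiplying the Hilbert series by $1-t^{l_0}$ and reindexing gives
\[(1-t^{l_0})\,H(\CDer_G^-(\R),t)=\sum_{k\in\mathbb{Z}}(r_k-r_{k-l_0})\,t^{k}.\]
Putting $i=k-l_0$, the rank equality forces $r_{k-l_0}=r_k$ whenever $k-l_0\neq l_0$, so every coefficient vanishes except at $k=2l_0$, where it equals $r_{2l_0}-r_{l_0}$. Therefore the whole periodic part is annihilated and
\[H(\CDer_G^-(\R),t)=\frac{(r_{2l_0}-r_{l_0})\,t^{2l_0}}{1-t^{l_0}},\]
which is rational, as desired.

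I expect the genuine work to lie in the second paragraph rather than in the final algebra. One must verify carefully that $\f_d$ preserves the small interior degree by degree — this is exactly where abelianness of $G$ and Proposition \ref{prop:3.3} are indispensable, since one needs $[d_\l g]$ to again commute with every element of $G$ — and that the hypothesised invertibility on $\CDer_{\si^{i}}(\R)$ restricts to an honest isomorphism of the $-$-submodules, so that the inverse map, too, respects commutation with $G$ and the rank equality transfers to the small interior. Once the equalities $r_i=r_{i+l_0}$ for $i\neq l_0$ are secured, the remaining point is purely combinatorial: the sequence has exactly one defect, located at the excluded index, so multiplication by $1-t^{l_0}$ collapses the bi-infinite series to a single monomial and exhibits $H$ as a ratio of polynomials.
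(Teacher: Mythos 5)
Your argument is correct in substance, and its first half coincides with the paper's: both read $\f_d$ as $g\mapsto[d_\l g]$, use Proposition \ref{prop:3.3} to view it as a degree-$l_0$ map of the graded module $\CDer_{G}^{-}(\R)$, and convert the invertibility hypothesis into the rank equalities $r_i=r_{i+l_0}$ for $i\neq l_0$. (Both treatments share the same looseness here: Proposition \ref{prop:3.3} requires \emph{both} arguments to lie in the small interior, so one really needs $d$ to commute with $\si$, and one must check that invertibility on $\CDer_{\si^{i}}(\R)$ restricts to the $-$-submodules; you at least flag this explicitly, the paper does not.) Where you genuinely differ is the series step. The paper splits $H$ at $k=l_0$ into two one-sided sums, evaluates each as a geometric series, and adds the two rational expressions; you multiply the two-sided series by $1-t^{l_0}$ and telescope down to the single monomial $(r_{2l_0}-r_{l_0})t^{2l_0}$. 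Your route is shorter and is in fact more careful about the one uncontrolled link: the paper uses the same coefficients $m_i=\mathrm{rank}(\CDer^{-}_{\si^{l_0+1+i}}(\R))$ for \emph{both} halves of its sum, which tacitly assumes $r_{l_0}=r_{2l_0}$ --- precisely the equality the hypothesis does not provide --- and under that assumption its displayed answer actually simplifies to $0$. Your expression $\frac{(r_{2l_0}-r_{l_0})t^{2l_0}}{1-t^{l_0}}$ is what the paper's two-sided sum gives when the two halves are evaluated honestly.

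One step does need repair. On two-sided series, multiplication by the polynomial $1-t^{l_0}$ is well defined but not injective (it annihilates $\sum_{k\in\mathbb{Z}}t^{kl_0}$), so from $(1-t^{l_0})H=(r_{2l_0}-r_{l_0})t^{2l_0}$ you cannot literally divide: if all the ranks happened to be equal, your final formula would assert $H=0$ even though the series is nonzero. Either take the identity ``$(1-t^{l_0})\cdot H$ is a polynomial'' as the definition of rationality for such bi-infinite series --- in which case you are finished as soon as the telescoping is done --- or run the same telescoping separately on the two one-sided halves $\sum_{k>l_0}$ and $\sum_{k\le l_0}$, working in $\mathbb{Q}((t))$ and $\mathbb{Q}((t^{-1}))$ respectively, where $1-t^{l_0}$ \emph{is} invertible; adding the two resulting rational functions recovers exactly your formula, and this is the sense in which the paper's statement (and its own proof) is meant.
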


\begin{proof}
Since $G$ is an infinite cyclic group generated by $\si$, we have $\f_{d} : \CDer_{\si^{\it{k}}}^{-}(\R) \rightarrow \CDer_{\si^{\it{k + l_{\rm{0}}}}}^{-}(\R)$ is a $\mathbb{C}[\partial]$-module isomorphism for all $k \in \mathbb{Z}\setminus\{l_{0}\}$ by Proposition \ref{prop:3.3}. Hence, $\rm{rank}(\CDer_{\si^{\it{k}}}^{-}(\R)) = \rm{rank}(\CDer_{\si^{\it{k + l_{\rm{0}}}}}^{-}(\R)) = \rm{rank}(\CDer_{\si^{\it{k - l_{\rm{0}}}}}^{-}(\R))$ for each $k \in \mathbb{N}\setminus\{l_{0}\}$. It is obvious that,
\[H(\CDer_{\it{G}}^{-}(\R), t) = \sum_{k \in \mathbb{Z}}\rm{rank}(\CDer_{\si^{\it{k}}}^{-}(\R))t^{k} = \sum_{\it{k} = \it{l}_{\rm{0}} + \rm{1}}^{\infty}\rm{rank}(\CDer_{\si^{\it{k}}}^{-}(\R))\it{t}^{\it{k}} + \sum_{-\infty}^{k = \it{l}_{\rm{0}}}\rm{rank}(\CDer_{\si^{\it{k}}}^{-}(\R))\it{t}^{\it{k}}.\]
In addition,
\begin{align*}
&~~~~\sum_{k = l_{\rm{0}} + \rm{1}}^{\infty}\rm{rank}(\CDer_{\si^{\it{k}}}^{-}(\R))\it{t}^{\it{k}}\\
&= (m_{0}t^{l_{0} + 1} + m_{1}t^{l_{0} + 2} + \cdots + m_{l_{0} - 1}t^{2l_{0}}) + (m_{0}t^{2l_{0} + 1} + m_{1}t^{2l_{0} + 2} + \cdots + m_{l_{0} - 1}t^{3l_{0}}) + \cdots\\
&= (m_{0} + m_{1}t + \cdots + m_{l_{0} - 1}t^{l_{0} - 1})t^{l_{0} + 1} + (m_{0} + m_{1}t + \cdots + m_{l_{0} - 1}t^{l_{0} - 1})t^{2l_{0} + 1} + \cdots\\
&= t^{l_{0} + 1}(m_{0} + m_{1}t + \cdots + m_{l_{0} - 1}t^{l_{0} - 1})(1 + t^{l_{0}} + t^{2l_{0}} + \cdots)\\
&= t^{l_{0} + 1}(m_{0} + m_{1}t + \cdots + m_{l_{0} - 1}t^{l_{0} - 1})\frac{1}{1 - t^{l_{0}}}\\
&= \frac{t^{l_{0} + 1}\sum_{i = 0}^{l_{0} - 1}m_{i}t^{i}}{1 - t^{l_{0}}}
\end{align*}
where $m_{i} = \rm{rank}(\CDer_{\si^{\it{l_{\rm{0}} + \rm{1} + i}}}^{-}(\R))$ for $0 \leq i \leq l_{0} - 1$. Similarly,
\begin{align*}
&~~~~\sum_{-\infty}^{k = l_{\rm{0}}}\rm{rank}(\CDer_{\si^{\it{k}}}^{-}(\R))\it{t}^{\it{k}}\\
&= \sum_{k = 1}^{l_{0}}\rm{rank}(\CDer_{\si^{\it{k}}}^{-}(\R))\it{t}^{\it{k}} + \sum_{-\infty}^{k = \rm{0}}\rm{rank}(\CDer_{\si^{\it{k}}}^{-}(\R))\it{t}^{\it{k}}\\
&= \sum_{k = 1}^{l_{0}}\rm{rank}(\CDer_{\si^{\it{k}}}^{-}(\R))\it{t}^{\it{k}} + \sum_{k = \rm{0}}^{\infty}\rm{rank}(\CDer_{\si^{\it{-k}}}^{-}(\R))\it{t}^{\it{k}}\\
&= m_{0}t + \cdots + m_{l_{0} - 1}t^{l_{0}} + (m_{0}t^{-l_{0} + 1} + m_{1}t^{-l_{0} + 2} + \cdots + m_{l_{0} - 1})\\
&~~~~+(m_{0}t^{-2l_{0} + 1} + m_{1}t^{-2l_{0} + 2} + \cdots + m_{l_{0} - 1}t^{-l_{0}}) + \cdots\\
&= m_{0}t + \cdots + m_{l_{0} - 1}t^{l_{0}} + (m_{0}t^{-l_{0} + 1} + m_{1}t^{-l_{0} + 2} + \cdots + m_{l_{0} - 1})(1 + t^{-l_{0}} + \cdots)\\
&= (m_{0} + \cdots + m_{l_{0} - 1}t^{l_{0} - 1})t + \frac{\sum_{i = 0}^{l_{0} - 1}m_{i}t^{-(l_{0} - 1 - i)}}{1 - t^{-l_{0}}}.
\end{align*}
Therefore, we have
\begin{align*}
&~~~~H(\CDer_{\it{G}}^{-}(\R), t)\\
&= \frac{t^{l_{0} + 1}\sum_{i = 0}^{l_{0} - 1}m_{i}t^{i}}{1 - t^{l_{0}}} + (m_{0} + \cdots + m_{l_{0} - 1}t^{l_{0} - 1})t + \frac{\sum_{i = 0}^{l_{0} - 1}m_{i}t^{-(l_{0} - 1 - i)}}{1 - t^{-l_{0}}}\\
&= \frac{t}{1 - t^{l_{0}}}\sum_{i = 0}^{l_{0} - 1}m_{i}t^{i} + \frac{1}{1 - t^{-{l_{0}}}}\sum_{i = 0}^{l_{0} - 1}m_{i}t^{-(l_{0} - 1 - i)}.
\end{align*}
Consequently, $H(\CDer_{\it{G}}^{-}(\R), t)$ is a rational function.
\end{proof}

\section{Applications}
\def\theequation{\arabic{section}.\arabic{equation}}
\setcounter{equation} {0}

In this section, we study the relation between conformal $(\si,\t)$-derivation and some (generalized) conformal derivations of a Lie conformal superalgebra $\R$, such as centroids and conformal $(\a,\b,\g)$-derivations.
\subsection{Relation with centroids}
Recall that $\rm{ad}:\R\it{\ra} \Cend(\R)$ denotes the adjoint map sending $a$ to $\rm{ad}(\it{a})$ with $\rm{ad}(\it{a})_{\it{\l}}(\it{b})=[a_{\l}b]$, where $a,b\in\R$. And we denote $\rm{ad}(\R)=\{\rm{ad}(\it{a})|a\in\R\}$.

\begin{prop}\label{prop:4.1}
Suppose that $\si \in G$ and $d \in \C(\R) \cap \CDer_{\si}(\R)$. Then $\rm{ad}(\it{d_{\l}(a)})_{\it{\mu}} = \rm{0}$ for any $a\in\R$. In addition, if $\rm{Z}(\R)= \{\rm{0}\}$, then $\C(\R) \cap \CDer_{\si}(\R) = \{\rm{0}\}$.
\end{prop}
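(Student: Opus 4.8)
The plan is to play the centroid identity and the $\si$-derivation identity against one another: both are linear constraints on $d$, and their combination should force the image of $d$ into the center. Write $\th=|d|$. Since $d\in\C(\R)$, the centroid axiom supplies the two relations
\[[(d_{\l}(a))_{\l+\mu}b]=d_{\l}([a_{\mu}b])\qquad\text{and}\qquad[(d_{\l}(a))_{\l+\mu}b]=(-1)^{\th|a|}[a_{\mu}(d_{\l}(b))],\]
valid for all $a,b\in\R$. On the other hand, since $d\in\CDer_{\si}(\R)=\CDer_{\si,\id_{\R}}(\R)$, specializing the conformal $G$-derivation identity to $\t=\id_{\R}$ (so that $\t(a)=a$) gives
\[d_{\l}([a_{\mu}b])=[(d_{\l}(a))_{\l+\mu}(\si(b))]+(-1)^{\th|a|}[a_{\mu}(d_{\l}(b))].\]

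First I would substitute both centroid relations into this derivation identity: replacing the left-hand side $d_{\l}([a_{\mu}b])$ by $[(d_{\l}(a))_{\l+\mu}b]$, and the trailing summand $(-1)^{\th|a|}[a_{\mu}(d_{\l}(b))]$ by $[(d_{\l}(a))_{\l+\mu}b]$ as well. The two identical terms then cancel, leaving
\[[(d_{\l}(a))_{\l+\mu}(\si(b))]=0,\qquad\forall\,a,b\in\R.\]
Next I would invoke that $\si\in G\le\rm{Aut}(\R)$ is in particular surjective: as $b$ ranges over $\R$, the element $\si(b)$ exhausts $\R$, so $[(d_{\l}(a))_{\l+\mu}c]=0$ for all $a,c\in\R$. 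Since $\l$ is fixed but $\mu$ is a free formal parameter, $\l+\mu$ ranges over all values, and after relabelling this is precisely $\rm{ad}(d_{\l}(a))_{\mu}=0$ for every $a\in\R$, which is the first assertion.

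For the second statement, note that $\rm{ad}(d_{\l}(a))_{\mu}=0$ means $d_{\l}(a)$ brackets trivially with all of $\R$; comparing coefficients of $\l^{n}$ in $[(d_{\l}(a))_{\mu}c]=\sum_{n}\frac{\l^{n}}{n!}[(d_{(n)}(a))_{\mu}c]$, each $d_{(n)}(a)$ lies in $\rm{Z}(\R)$. Hence if $\rm{Z}(\R)=\{0\}$ then $d_{(n)}(a)=0$ for all $n$ and all $a$, so $d=0$ and $\C(\R)\cap\CDer_{\si}(\R)=\{0\}$. I do not expect a genuine obstacle here: the core is a two-line cancellation followed by the surjectivity of $\si$. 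The only point demanding care is the bookkeeping—matching the spectral parameters $\l,\mu,\l+\mu$ correctly and keeping the parity signs $(-1)^{\th|a|}$ aligned when the two centroid relations are inserted into the derivation identity, since a single mismatched index or sign would destroy the crucial cancellation.
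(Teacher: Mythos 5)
Your proposal is correct and follows essentially the same route as the paper: both cancel the centroid relation $(-1)^{|d||a|}[a_{\mu}(d_{\l}(b))]=d_{\l}([a_{\mu}b])$ against the $\si$-derivation identity to obtain $[(d_{\l}(a))_{\l+\mu}(\si(b))]=0$, then use bijectivity of $\si$ to conclude $d_{\l}(a)\in{\rm Z}(\R)[\l]$, whence $d=0$ when the center vanishes. Your explicit coefficient-of-$\l^{n}$ argument for the final step is slightly more detailed than the paper's, which asserts $d=0$ directly, but the substance is identical.
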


\begin{proof}
For any $d \in \C(\R) \cap \CDer_{\si}(\R)$ and $a,b\in\R$, we have
$$d_{\l}([a_{\g}b])
=[(d_{\l}(a))_{\l+\g}(\si(b))]+(-1)^{|a||d|}[a_{\g}(d_{\l}(b))],$$
and
$$(-1)^{|a||d|}[a_{\g}(d_{\l}(b))]=d_{\l}([a_{\l}b]),$$
which implies
$$[(d_{\l}(a))_{\l+\g}(\si(b))]=d_{\l}([a_{\g}b])-(-1)^{|a||d|}[a_{\g}(d_{\l}(b))]=0.$$
Since $\si$ is a bijective map, we can obtain
$d_{\l}(a)\in \rm{Z}(\R)[\l] = \rm{Ker}(\rm{ad})[\l]$. Hence, $\rm{ad}(\it{d_{\l}(a)})_{\it{\mu}} =\rm{0}$ for any $a\in\R$.

Particularly, if $\rm{Z}(\R)= \{\rm{0}\}$, then  $d=0$. Thus, $\C(\R) \cap \CDer_{\si}(\R) = \{\rm{0}\}$.
\end{proof}

\begin{lem}\label{lemma:4.2}
Suppose that $\si \in G$ and $d \in \CDer_{\si}(\R)$. Then for any $a\in\R$, we have $$[d_{\l}(\rm{ad}(\it{a}))]_{\mu} = \si\rm{ad}(\it{\si^{-1}}\it{d}_{\it{\l}}(\it{a}))_{\it{\mu}}.$$
\end{lem}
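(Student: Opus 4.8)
The plan is to prove the identity by evaluating both conformal linear maps on an arbitrary element $c \in \R$ and checking that their $\l$-components agree for all $\l,\mu$; since two conformal linear maps coincide precisely when they agree on every input, this suffices.

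First I would expand the left-hand side using the $\l$-bracket (\ref{def2-5}) of $gc(\R)$. Observing that $\ad(a)$ has parity $|a|$ and that $\ad(a)_{\nu}(c)=[a_{\nu}c]$, I obtain
\begin{align*}
[d_{\l}(\ad(a))]_{\mu}(c) &= d_{\l}(\ad(a)_{\mu-\l}(c)) - (-1)^{|d||a|}\ad(a)_{\mu-\l}(d_{\l}(c))\\
&= d_{\l}([a_{\mu-\l}c]) - (-1)^{|d||a|}[a_{\mu-\l}(d_{\l}(c))].
\end{align*}
Next I would apply the defining identity of $\CDer_{\si}(\R)$ (the $(\si,\id_{\R})$-derivation property) to the first summand, substituting $\mu \mapsto \mu-\l$ and $b \mapsto c$. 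Since $\l+(\mu-\l)=\mu$, this produces
$d_{\l}([a_{\mu-\l}c]) = [(d_{\l}(a))_{\mu}(\si(c))] + (-1)^{|a||d|}[a_{\mu-\l}(d_{\l}(c))]$.
Substituting back, the two copies of $[a_{\mu-\l}(d_{\l}(c))]$ cancel, leaving the simplified expression $[d_{\l}(\ad(a))]_{\mu}(c) = [(d_{\l}(a))_{\mu}(\si(c))]$.

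Finally I would expand the right-hand side. Using that $\si$ is an automorphism of $\R$, and therefore commutes with the $\l$-bracket, together with $\si\si^{-1}=\id_{\R}$, I compute
$\si\,\ad(\si^{-1}d_{\l}(a))_{\mu}(c) = \si([(\si^{-1}d_{\l}(a))_{\mu}c]) = [(\si\si^{-1}d_{\l}(a))_{\mu}(\si(c))] = [(d_{\l}(a))_{\mu}(\si(c))]$,
which agrees with the simplified left-hand side. Since $c$, $\l$, and $\mu$ were arbitrary, the two conformal maps are equal.

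The argument is a direct unwinding of the definitions, so I do not expect a deep obstacle; the only point demanding care is the bookkeeping of the spectral parameters—specifically, verifying that the shift $\mu \mapsto \mu-\l$ in the derivation identity yields exactly $(d_{\l}(a))_{\mu}$ (via $\l+(\mu-\l)=\mu$), so that the surviving $\ad$-terms cancel precisely. I would also confirm the sign compatibility $(-1)^{|d||a|}=(-1)^{|a||d|}$ and that $\si$, as a $\mathbb{C}[\partial]$-linear homomorphism, passes through the bracket without altering the spectral parameters.
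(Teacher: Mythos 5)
Your proposal is correct and follows essentially the same route as the paper's proof: expand $[d_{\l}(\ad(a))]_{\mu}$ via the $\l$-bracket of $gc(\R)$, apply the defining identity of $\CDer_{\si}(\R)$ at the shifted parameter $\mu-\l$ so the $[a_{\mu-\l}(d_{\l}(\cdot))]$ terms cancel, and then pull $\si$ through the bracket using that it is an automorphism. The only (immaterial) stylistic difference is that you expand both sides and meet at $[(d_{\l}(a))_{\mu}(\si(c))]$, whereas the paper rewrites the left-hand side all the way into the right-hand side in a single chain.
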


\begin{proof}
For any $b \in \R$, we have
\begin{align*}
&~~~~[d_{\l}(\rm{ad}(\it{a}))]_{\mu}(b)
=(d_{\l}(\rm{ad}(\it{a}))_{\mu-\l}-(-1)^{|a||d|}(\rm{ad}(\it{a}))_{\mu-\l}d_{\l})(b)\\ &=d_{\l}((\rm{ad}(\it{a}))_{\mu-\l}(b))-(-1)^{|a||d|}(\rm{ad}(\it{a}))_{\mu-\l}(d_{\l}(b))
=d_{\l}([a_{\mu-\l}b])-(-1)^{|a||d|}[a_{\mu-\l}(d_{\l}(b))]\\
&=[(d_{\l}(a))_{\mu}(\si(b))]
=\si([(\si^{-1}(d_{\l}(a)))_{\mu}b])
=\si(\rm{ad}(\it{\si^{-1}}\it{d}_{\it{\l}}(\it{a}))_{\it{\mu}}(b)).
\end{align*}
Therefore, $[d_{\l}\rm{ad}(\it{a})]_{\mu} = \si\rm{ad}(\it{\si^{-1}}\it{d}_{\it{\l}}(\it{a}))_{\it{\mu}}$.
\end{proof}

\begin{lem}\label{lemma:4.3}
Let $a \in \R$ and $\si \in G$. Define a map $\f_{a}^{\si} : \CDer_{\si}(\R) \ra \rm{ad}(\R)_{\g}$, given by $d\mapsto \rm{ad}(\si^{-1}\it{d}_{\l}(a))$.
Then $\f_{a}^{\si}$ is a $\mathbb{C}[\partial]$-module homomorphism.
\end{lem}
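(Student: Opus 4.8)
The plan is to prove that $\f_a^\si$ is a $\mathbb{C}[\partial]$-module homomorphism by directly verifying the two defining properties: additivity (respecting the $\mathbb{C}$-linear structure) and compatibility with the action of $\partial$. Since $\f_a^\si$ sends $d \mapsto \ad(\si^{-1}d_\l(a))$, both verifications reduce to unwinding the definition and exploiting the $\mathbb{C}[\partial]$-linearity of $\si^{-1}$ together with the action $\partial d = -\l d_\l$ inherited from $\Cend(\R)$.

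For additivity, I would fix $d_1, d_2 \in \CDer_\si(\R)$ and compute
\[
\f_a^\si(d_1 + d_2) = \ad\bigl(\si^{-1}(d_1 + d_2)_\l(a)\bigr) = \ad\bigl(\si^{-1}(d_{1,\l}(a) + d_{2,\l}(a))\bigr).
\]
Since $\si^{-1}$ is $\mathbb{C}[\partial]$-linear (being an automorphism of $\R$) and $\ad$ is additive in its argument, the right-hand side splits as $\ad(\si^{-1}d_{1,\l}(a)) + \ad(\si^{-1}d_{2,\l}(a)) = \f_a^\si(d_1) + \f_a^\si(d_2)$, as required.

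For compatibility with $\partial$, the key is the relation $\partial d = -\l d_\l$ in $\Cend(\R)$, so $(\partial d)_\l = -\l d_\l$. I would then compute
\[
\f_a^\si(\partial d) = \ad\bigl(\si^{-1}(\partial d)_\l(a)\bigr) = \ad\bigl(\si^{-1}(-\l d_\l(a))\bigr) = -\l\,\ad\bigl(\si^{-1}d_\l(a)\bigr) = \partial\,\f_a^\si(d),
\]
where the second-to-last equality again uses $\mathbb{C}$-linearity of $\si^{-1}$ and $\ad$, and the last equality uses the convention $\partial\,\ad(x) = -\l\,\ad(x)$ on the conformal module $\ad(\R)$. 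This establishes that $\f_a^\si$ commutes with $\partial$.

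I do not anticipate any genuine obstacle here: the statement is essentially a bookkeeping verification, and the only subtlety is making sure the action of $\partial$ on $\Cend(\R)$ (namely $\partial d_\l = -\l d_\l$) is applied correctly on both sides so that the scalar $-\l$ factors cleanly through $\si^{-1}$ and $\ad$. The mild conceptual point worth flagging is that the codomain $\ad(\R)_\g$ carries the conformal module structure via $\partial\,\ad(x) = -\l\,\ad(x)$, exactly matching the source structure on $\CDer_\si(\R)$, which is what forces the two $-\l$ factors to cancel into a genuine equality of module homomorphisms rather than merely a $\mathbb{C}$-linear map.
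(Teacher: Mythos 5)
Your proof is correct and takes essentially the same route as the paper's: a direct verification of additivity and of compatibility with $\partial$, using linearity of $\ad$ and $\si^{-1}$ together with the module action $(\partial d)_\l = -\l\, d_\l$ on the source and the matching action $-\l$ on the image. The only cosmetic difference is that the paper evaluates everything on an element $b \in \R$ and unwinds $\ad$ into $\l$-brackets (routing additivity through the homomorphism property of $\si^{-1}$), whereas you argue directly at the level of maps.
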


\begin{proof}
For any $f, g \in \CDer_{\si}(\R)$ and $b \in \R$, we have
\begin{align*}
&~~~~(\f_{a}^{\si}(f_{\l} + g_{\mu}))(b)
= \rm{ad}(\si^{-1}((\it{f}_{\l} +\it{g}_{\mu})(a)))_{\g}(b)
=[(\si^{-1}((f_{\l} + g_{\mu})(a)))_{\g}b] \\
&=\si^{-1}([((f_{\l} + g_{\mu})(a))_{\g}(\si(b))])
=\si^{-1}([(f_{\l}(a))_{\g}(\si(b))] + [(g_{\mu}(a))_{\g}(\si(b))])\\
&= \si^{-1}([(f_{\l}(a))_{\g}(\si(b))]) + \si^{-1}([(g_{\mu}(a))_{\g}(\si(b))])
=[(\si^{-1}(f_{\l}(a)))_{\g}b]+
[(\si^{-1}(g_{\mu}(a)))_{\g}b]\\
&=\rm{ad}(\si^{-1}(\it{f}_{\l}(\it{a})))_{\g}(\it{b})
+ \rm{ad}(\si^{-1}(\it{g}_{\mu}(\it{a})))_{\g}(\it{b})
= (\f_{a}^{\si}(f_{\l}) + \f_{a}^{\si}(g_{\mu}))_{\g}(b),
\end{align*}
and
\begin{align*}
&~~~~(\f_{a}^{\si}(\partial f_{\l}))_{\g}(b)
=\rm{ad}(\si^{-1}(\partial\it{f}_{\l})(a))_{\g}(b)
=\rm{ad}(\si^{-1}(-\l\it{f}_{\l})(a))_{\g}(b)\\
&=\rm{ad}(-\l\si^{-1}\it{f}_{\l}(a))_{\g}(b)
=[-\l\si^{-1}\it{f}_{\l}(a)_{\g}b]
=-\l[\si^{-1}\it{f}_{\l}(a)_{\g}b]\\
&=-\l\rm{ad}(\si^{-1}(\it{f}_{\l}(a)))_{\g}(b)
=\partial\rm{ad}(\si^{-1}(\it{f}_{\l}(a)))_{\g}(b)
=(\partial(\f_{a}^{\si}(f_{\l})))_{\g}(b).
\end{align*}
Therefore, $\f_{x}^{\si}$ is a $\mathbb{C}[\partial]$-module homomorphism.
\end{proof}

\begin{prop}\label{prop:4.4}
If $a \in \R$ and $\si \in G$, then
\[\rm{Ker}(\it{\f_{a}^{\si}}) = \{d \in \CDer_{\si}(\R)\;|\; d_{\l}(a) \in \rm{Z}(\R)[\l]\}.\]
What's more, $\rm{Ker}(\it{\f_{a}^{\si}})$ is a subalgebra of $\Cend(\R)$.
\end{prop}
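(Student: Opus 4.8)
The plan is to establish the two assertions in turn: the equality of sets is a direct computation through the kernel of $\ad$, while the subalgebra claim rests on a closure argument for the $\l$-bracket of $\Cend(\R)$.

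First I would unwind the definition of $\f_a^\si$. By Lemma \ref{lemma:4.3} we have $\f_a^\si(d) = \ad(\si^{-1}d_\l(a))$ for every $d \in \CDer_\si(\R)$, and $\ad(c) = 0$ is equivalent to $[c_\mu b] = 0$ for all $b \in \R$, i.e. to $c \in \Z(\R)$; thus $\mathrm{Ker}(\ad) = \Z(\R)$. Consequently $d \in \mathrm{Ker}(\f_a^\si)$ if and only if $\si^{-1}d_\l(a) \in \Z(\R)[\l]$. The remaining point is that an automorphism preserves the center: for $z \in \Z(\R)$ and $b \in \R$ one has $[(\si(z))_\l b] = \si([z_\l \si^{-1}(b)]) = 0$, so $\si(\Z(\R)) = \Z(\R)$ and likewise $\si^{-1}(\Z(\R)) = \Z(\R)$. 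Applying $\si$ then turns $\si^{-1}d_\l(a) \in \Z(\R)[\l]$ into $d_\l(a) \in \Z(\R)[\l]$ and conversely, which gives the asserted description of $\mathrm{Ker}(\f_a^\si)$.

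For the second assertion, since $\f_a^\si$ is a $\C[\partial]$-module homomorphism by Lemma \ref{lemma:4.3}, its kernel is at once a $\C[\partial]$-submodule of $\Cend(\R)$; it remains to prove closure under the $\l$-bracket. The key auxiliary fact I would record first is that a conformal $\si$-derivation sends the center into the center: given $d \in \CDer_\si(\R)$ and $z \in \Z(\R)$, the defining identity applied to $[z_\mu b] = 0$ reads $0 = [(d_\l(z))_{\l+\mu}(\si(b))] + (-1)^{|z||d|}[z_\mu(d_\l(b))]$, and the last term vanishes since $z$ is central; as $b \mapsto \si(b)$ is surjective, this forces $[(d_\l(z))_{\l+\mu}c] = 0$ for all $c \in \R$, i.e. $d_\l(z) \in \Z(\R)[\l]$.

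With this in hand, take $f, g \in \mathrm{Ker}(\f_a^\si)$, so that $f_\l(a)$ and $g_\mu(a)$ lie in $\Z(\R)[\l]$ and $\Z(\R)[\mu]$ respectively. Evaluating the bracket on the fixed element gives $[f_\l g]_\mu(a) = f_\l(g_{\mu-\l}(a)) - (-1)^{|f||g|}g_{\mu-\l}(f_\l(a))$; both inner terms $g_{\mu-\l}(a)$ and $f_\l(a)$ are central, so by the auxiliary fact each outer application stays central, whence $[f_\l g]_\mu(a) \in \Z(\R)[\l,\mu]$. Thus $[f_\l g]$ again kills $a$ into the center, and I would conclude that $\mathrm{Ker}(\f_a^\si)$ is closed under the bracket and hence a subalgebra of $\Cend(\R)$. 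The step demanding the most care — and the one I expect to be the real obstacle — is verifying that $[f_\l g]$ still lies in $\CDer_\si(\R)$, that is, that taking brackets does not leave the prescribed derivation class; here one must expand the $\Cend(\R)$-bracket and use the automorphism property of $\si$ exactly in the spirit of Proposition \ref{prop:2.4}, tracking the conformal variables carefully so that the $\si$-derivation identity is reproduced rather than a $\si^2$-derivation identity.
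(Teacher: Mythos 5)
Your treatment of the set equality is correct and is in substance the paper's own argument: the paper also reduces to ${\rm Ker}(\ad)=\Z(\R)$ (via Lemma \ref{lemma:4.2}) and then strips off $\si$, using surjectivity of $\si$ at the point where you invoke invariance of the center under automorphisms; these are the same observation. For the closure computation your organization is in fact tidier than the paper's: you first isolate the auxiliary fact that any $d\in\CDer_{\si}(\R)$ maps $\Z(\R)$ into $\Z(\R)[\l]$, and then read off directly that $[f_\l g]_\mu(a)=f_\l(g_{\mu-\l}(a))-(-1)^{|f||g|}g_{\mu-\l}(f_\l(a))$ lies in $\Z(\R)[\l,\mu]$, whereas the paper expands $[([f_\l g]_\mu(a))_{\g}(\si(b))]$ by the $\si$-derivation identity of $f$ and $g$ and observes that all four resulting terms vanish because $f_\l(a)$ and $g_{\mu-\l}(a)$ are central. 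The two computations are identical in content; yours packages the cancellation as a reusable lemma.

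The step you flag as ``the real obstacle''---verifying that $[f_\l g]$ remains in $\CDer_{\si}(\R)[\l]$---deserves a clear warning: the paper does not prove it either. Its proof ends by passing from ``$[f_\l g]_\mu(a)\in\Z(\R)[\l]$'' directly to ``$[f_\l g]\in{\rm Ker}(\f_{a}^{\si})[\l]$'', silently treating centrality of the value at $a$ as sufficient for membership in ${\rm Ker}(\f_{a}^{\si})$, even though ${\rm Ker}(\f_{a}^{\si})$ is by definition a subset of $\CDer_{\si}(\R)$. Moreover, your suspicion that one cannot ``reproduce the $\si$-derivation identity rather than a $\si^2$-derivation identity'' is well founded: exactly as in Proposition \ref{prop:2.4} (with $\si'=\si$), expanding $f_\l g_{\mu-\l}-(-1)^{|f||g|}g_{\mu-\l}f_\l$ on $[x_{\g}y]$ produces leading terms bracketed against $\si^{2}(y)$, and the cross terms cancel only under commutation hypotheses such as $f_\l\si=\si f_\l$; there is no general mechanism returning you to a $\si$-derivation identity. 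So you should not expect to close that step as you sketched it---rather, you have located a genuine gap that your proposal shares with the paper's own proof: with the literal definition ${\rm Ker}(\f_{a}^{\si})\subseteq\CDer_{\si}(\R)$, closure under the bracket of $\Cend(\R)$ requires precisely the unproved claim, and what both arguments actually establish is only that $[f_\l g]_\mu(a)$ is central. Up to that common deficiency, your proposal proves everything the paper's proof proves.
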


\begin{proof}
According to Lemma \ref{lemma:4.2}, we have
\begin{align*}
\rm{Ker}(\it{\f_{a}^{\si}})
&=\{d\in\CDer_{\si}(\R)\;|\;\rm{ad}(\si^{-1}(\it{d}_{\l}(a)))_{\mu}(b)
=\rm{0},\;\forall \it{b \in \R}\}\\
&=\{d\in\CDer_{\si}(\R)\;|\;\si(\rm{ad}(\si^{-1}\it{d}_{\l}(a))_{\mu}(b))=\rm{0},\;\forall \it{b \in \R}\}\\
&=\{d\in\CDer_{\si}(\R)\;|\;[d_{\l}(\rm{ad}(\it{a}))]_{\mu}(b)=\rm{0},\;\forall \it{b \in \R}\}\\
&=\{d\in\CDer_{\si}(\R)\;|\;[(d_{\l}(a))_{\mu}(\si(b))]=\rm{0},\;\forall \it{b \in \R}\}\\
&=\{d\in\CDer_{\si}(\R)\;|\;[(d_{\l}(a))_{\mu}b]=\rm{0},\;\forall \it{b \in \R}\}\\
&=\{d\in\CDer_{\si}(\R)\;|\;d_{\l}(a)\in\rm{Z}(\R)[\l]\}.
\end{align*}

Moreover, it is obvious that $\rm{Ker}(\it{\f_{a}^{\si}})$ is a $\mathbb{C}[\partial]$-module. We only need to show $\rm{Ker}(\it{\f_{a}^{\si}})$ is a Lie conformal superalgebra. For any $f,g\in\rm{Ker}(\it{\f_{a}^{\si}})$ and $b\in\R$, we have
\begin{align*}
&~~~~[([f_{\l}g]_{\mu}(a))_{\g}(\si(b))]
=[(f_{\l}(g_{\mu-\l}(a)))_{\g}(\si(b))]-(-1)^{|f||g|}
[(g_{\mu-\l}(f_{\l}(a)))_{\g}(\si(b))]\\
&=f_{\l}([(g_{\mu-\l}(a))_{\g-\l}b])-(-1)^{(|a|+|g|)|f|}
[(g_{\mu-\l}(a))_{\g-\l}(f_{\l}(b))]\\
&~~~~-(-1)^{|f||g|}g_{\mu-\l}([(f_{\l}(a))_{\g-\mu+\l}b])
+(-1)^{(|a|+|f|)|g|+|f||g|}[(f_{\l}(a))_{\g-\mu+\l}(g_{\mu-\l}(b))]\\
&= 0.
\end{align*}
Since $\si$ is an isomorphism, it follows that $[f_{\l}g]_{\mu}(a)\in\rm{Z}(\R)[\l]$, which implies that $[f_{\l}g]_{\mu}(a)\in \rm{Ker}(\it{\f_{a}^{\si}})[\l]$. Therefore, $\rm{Ker}(\it{\f_{x}^{\si}})$ is a subalgebra of $\Cend(\R)$.
\end{proof}

As a corollary, we can obtain the following main result.
\begin{cor}\label{cor:4.5}
Suppose that $\R$ be a centerless Lie conformal superalgebra. If there exists an element $a_{0} \in \R$ such that $d_{\l}(a_{0}) \neq 0$ for all $d \in \CDer_{\si}(\R)$, then $\rm{rank}(\CDer_{\si}(\R)) \leq \rm{rank}(\R)$.
\end{cor}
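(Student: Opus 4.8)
The plan is to realize this corollary as an immediate consequence of Lemma \ref{lemma:4.3} and Proposition \ref{prop:4.4}: the $\mathbb{C}[\partial]$-module homomorphism $\f_{a_0}^{\si}$ becomes \emph{injective} once the two hypotheses (centerlessness and the non-vanishing of $d_{\l}(a_0)$) are fed in, and an injective homomorphism of finitely generated $\mathbb{C}[\partial]$-modules cannot increase rank. First I would specialize Proposition \ref{prop:4.4} to the element $a = a_0$, which identifies
\[\rm{Ker}(\f_{a_0}^{\si}) = \{d \in \CDer_{\si}(\R) \mid d_{\l}(a_0) \in \rm{Z}(\R)[\l]\}.\]
Since $\R$ is centerless we have $\rm{Z}(\R) = \{0\}$, so this kernel is precisely the set of $d$ with $d_{\l}(a_0) = 0$. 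By hypothesis no nonzero element of $\CDer_{\si}(\R)$ annihilates $a_0$, hence $\rm{Ker}(\f_{a_0}^{\si}) = \{0\}$ and $\f_{a_0}^{\si}$ is injective.

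Next I would convert injectivity into the desired rank inequality. As $\f_{a_0}^{\si} : \CDer_{\si}(\R) \ra \rm{ad}(\R)$ is an injective homomorphism of finitely generated $\mathbb{C}[\partial]$-modules, I would base change along the inclusion of $\mathbb{C}[\partial]$ into its field of fractions $\mathbb{C}(\partial)$; since this localization is flat, the induced map stays injective and yields
\[\rm{rank}(\CDer_{\si}(\R)) = \dim_{\mathbb{C}(\partial)}(\mathbb{C}(\partial) \otimes_{\mathbb{C}[\partial]} \CDer_{\si}(\R)) \leq \dim_{\mathbb{C}(\partial)}(\mathbb{C}(\partial) \otimes_{\mathbb{C}[\partial]} \rm{ad}(\R)) = \rm{rank}(\rm{ad}(\R)).\]
It then remains to compute $\rm{rank}(\rm{ad}(\R))$. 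The adjoint map $\rm{ad} : \R \ra \Cend(\R)$ is a $\mathbb{C}[\partial]$-module homomorphism with kernel $\rm{Z}(\R) = \{0\}$, so it is injective and $\rm{ad}(\R) \cong \R$ as $\mathbb{C}[\partial]$-modules; therefore $\rm{rank}(\rm{ad}(\R)) = \rm{rank}(\R)$. Chaining the two displays gives $\rm{rank}(\CDer_{\si}(\R)) \leq \rm{rank}(\R)$, as claimed.

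The only genuinely non-formal point is the middle step, that an injection of finitely generated $\mathbb{C}[\partial]$-modules does not raise rank; I expect this to be the one thing worth spelling out, though it is standard: $\mathbb{C}[\partial]$ is a principal ideal domain, rank is defined in the excerpt as the $\mathbb{C}(\partial)$-dimension after base change to the fraction field, and base change along the flat map $\mathbb{C}[\partial] \hookrightarrow \mathbb{C}(\partial)$ preserves injectivity. I would also record the finiteness bookkeeping needed for the ranks to make sense: $\R$ is finite by the standing assumption, hence $\rm{ad}(\R) \cong \R$ is finite, and $\CDer_{\si}(\R)$ embeds into this finite module over the Noetherian ring $\mathbb{C}[\partial]$, so it too is finitely generated. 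With these remarks the argument is complete.
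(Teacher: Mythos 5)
Your proposal is correct and follows essentially the same route as the paper: both deduce injectivity of $\f_{a_{0}}^{\si}$ from Proposition \ref{prop:4.4} together with centerlessness and the non-annihilation hypothesis, identify $\rm{ad}(\R)\cong\R$ via $\rm{Ker}(\rm{ad})=\rm{Z}(\R)=\{0\}$, and conclude the rank inequality from the resulting embedding of $\mathbb{C}[\partial]$-modules. The only difference is that you spell out the standard localization argument (flat base change to $\mathbb{C}(\partial)$) that the paper leaves implicit.
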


\begin{proof}
Note that $\rm{ad} : \R \rightarrow \rm{ad}(\R)$ is an isomorphism. Since $Z(\R)=\{0\}$, we have  $\f_{a_{0}}^{\si}$ is injective by Proposition \ref{prop:4.4}.  Hence, as a $\mathbb{C}[\partial]$-module, $\CDer_{\si}(\R)$ can be embedded into $\R$.
\end{proof}

\subsection{Relation with conformal $(\a,\b, \g)$-derivations}

Similarly to the investigation of conformal $(\a,\b, \g)$-derivations on Lie conformal algebra, we shall define it on Lie conformal superalgebra.

Let $\R$ be an arbitrary Lie conformal superalgebra. We call a conformal linear map $d\in \Cend (\R)$ a \emph {conformal $(\alpha,\beta,\gamma)$-derivation} of degree $\th$ of $\R$ if there exist $\a,\b,\g\in\mathbb{C}$ such that for any $a,b\in \R$, the following relation is satisfied:
\begin{equation*}
\alpha d_\lambda([a_\mu b])=[(\beta d_\l (a))_{\l+\mu}b]+(-1)^{|a||\th|}[a_\mu(\g d_\l(b))].
\end{equation*}
For any given $\alpha,\beta,\gamma\in \mathbb{C}$, we denote the set of all conformal $(\alpha,\beta,\gamma)$-derivations of degree $\th$ by $\CDer_{(\alpha,\beta,\gamma)}(\R)_{\th}$, i.e.
\begin{equation*}\label{40000}
\CDer_{(\alpha,\beta,\gamma)}(\R)_{\th}=\{d\in \Cend(\R)  \,~|~\, \alpha d_\l([a_\mu b])=[(\beta d_\l (a))_{\l+\mu}b]+(-1)^{|a||\th|}[a_\mu(\g d_\l (b))],\;\forall~a,b\in \R\}.
\end{equation*}
Denote by $\CDer_{(\alpha,\beta,\gamma)}(\R)
=\CDer_{(\alpha,\beta,\gamma)}(\R)_{\bar{0}}
\oplus \CDer_{(\alpha,\beta,\gamma)}(\R)_{\bar{1}}$ the set of all conformal $(\alpha,\beta,\gamma)$-derivations of $\R$.

Obviously, we have the following conclusions.
\begin{prop}\label{prop:4.6}
\begin{itemize}\parskip-3pt
\item[\rm(1)]$\CDer_{(1,1,1)}(\R)=\CDer(\R)$.
\item[\rm(2)]$\CDer_{(0,1,-1)}(\R)=\QC(\R)$.
\item[\rm(3)]$\CDer_{(1,0,0)}(\R)\cap \CDer_{(0,1,0)}(\R)=\ZDer(\R)$.
\item[\rm(4)]$\CDer_{(\alpha,\beta,\gamma)}(\R)=
\CDer_{(\triangle\alpha,\triangle\beta,\triangle\gamma)}(\R)$,\ \ \mbox{ \ for any $\triangle\in \mathbb{C}^*$.}
\end{itemize}
\end{prop}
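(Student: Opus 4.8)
The plan is to verify each of the four claimed identities directly from the defining relation of a conformal $(\a,\b,\g)$-derivation, namely $\alpha d_\lambda([a_\mu b])=[(\beta d_\l(a))_{\l+\mu}b]+(-1)^{|a||\th|}[a_\mu(\g d_\l(b))]$, by substituting the specific parameter triples and comparing with the definitions collected in Section~1. This is purely a matter of matching defining equations, so no deep machinery is needed; the only genuine care required is bookkeeping of the $\mathbb{C}$-linearity of the $\lambda$-bracket in its scalar arguments, e.g. that $[(\beta d_\l(a))_{\l+\mu}b]=\beta[(d_\l(a))_{\l+\mu}b]$, which is immediate since each $n$-product is $\mathbb{C}$-linear.

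For (1), I would set $\a=\b=\g=1$; the defining relation becomes exactly $d_\lambda([a_\mu b])=[(d_\l(a))_{\l+\mu}b]+(-1)^{|a||\th|}[a_\mu(d_\l(b))]$, which is the definition of a conformal derivation, giving $\CDer_{(1,1,1)}(\R)=\CDer(\R)$. For (2), setting $(\a,\b,\g)=(0,1,-1)$ turns the left side into $0$ and the relation into $[(d_\l(a))_{\l+\mu}b]-(-1)^{|a||\th|}[a_\mu(d_\l(b))]=0$, i.e. $[(d_\l(a))_{\l+\mu}b]=(-1)^{|a||\th|}[a_\mu(d_\l(b))]$, which is precisely the quasicentroid condition, so $\CDer_{(0,1,-1)}(\R)=\QC(\R)$. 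For (3), I would treat the intersection: membership in $\CDer_{(1,0,0)}(\R)$ forces $d_\lambda([a_\mu b])=0$, while membership in $\CDer_{(0,1,0)}(\R)$ forces $[(d_\l(a))_{\l+\mu}b]=0$; together these are exactly the two equations $[(d_\l(a))_{\l+\mu}b]=d_\l([a_\mu b])=0$ defining a central derivation, yielding $\ZDer(\R)$. I should note a small subtlety here: the stated quasicentroid and central-derivation definitions use the sign convention $(-1)^{\th|a|}$ whereas the $(\a,\b,\g)$-definition uses $(-1)^{|a||\th|}$; these agree since the product of parities is symmetric, so no genuine issue arises.

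For (4), given $\triangle\in\mathbb{C}^{*}$, I would show both inclusions by scaling. If $d$ satisfies the $(\a,\b,\g)$-relation, multiplying the entire identity by $\triangle$ and pulling the scalar inside each $\mathbb{C}$-linear bracket gives $\triangle\alpha\, d_\lambda([a_\mu b])=[(\triangle\beta\, d_\l(a))_{\l+\mu}b]+(-1)^{|a||\th|}[a_\mu(\triangle\gamma\, d_\l(b))]$, so $d\in\CDer_{(\triangle\a,\triangle\b,\triangle\g)}(\R)$; since $\triangle$ is invertible, the reverse inclusion follows by scaling by $\triangle^{-1}$, proving equality of the two sets. The main obstacle, such as it is, is nothing more than ensuring the scalar factors are correctly distributed across the $n$-products (equivalently the $\lambda$-bracket) and that the two sign conventions are reconciled as noted above; there is no structural difficulty, and each part reduces to reading off a definition after an elementary substitution.
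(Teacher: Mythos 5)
Your proposal is correct: the paper states Proposition 4.6 without proof (prefacing it only with ``Obviously, we have the following conclusions''), and your direct substitution of the parameter triples into the defining relation, together with the scaling argument for (4), is exactly the routine verification the paper leaves implicit. Your remarks on $\mathbb{C}$-linearity of the $\lambda$-bracket and on the equivalence of the sign conventions $(-1)^{\theta|a|}$ and $(-1)^{|a||\theta|}$ are accurate and complete the argument.
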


Furthermore, the following proposition has many useful applications.

\begin{prop}\label{prop:4.7}
$\CDer_{(\alpha,\beta,\gamma)}(\R)=\CDer_{(0,\beta-\gamma,\gamma-\beta)}(\R)\cap \CDer_{(2\alpha,\beta+\gamma,\beta+\gamma)}(\R)$, where $\alpha,\beta,\gamma\in \mathbb{C}$.
\end{prop}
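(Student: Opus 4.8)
The plan is to prove the set equality by showing that membership in the left-hand side is equivalent to simultaneous membership in the two sets on the right-hand side. Concretely, I would fix a homogeneous element $d\in\Cend(\R)_\th$ and unwind all three defining conditions into identities involving the two conformal expressions $[(d_\l(a))_{\l+\mu}b]$ and $(-1)^{|a||\th|}[a_\mu(d_\l(b))]$; the whole argument is then a short piece of linear algebra in the two-dimensional space spanned by these two brackets (together with the bracket $d_\l([a_\mu b])$).

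First I would write out the defining relation for $d\in\CDer_{(\a,\b,\g)}(\R)$, namely
\[
\a\, d_\l([a_\mu b]) = [(\b\, d_\l(a))_{\l+\mu}b] + (-1)^{|a||\th|}[a_\mu(\g\, d_\l(b))]
= \b[(d_\l(a))_{\l+\mu}b] + \g(-1)^{|a||\th|}[a_\mu(d_\l(b))].
\]
Next I would record what the two right-hand conditions demand: membership in $\CDer_{(0,\b-\g,\g-\b)}(\R)$ says
\[
0 = (\b-\g)[(d_\l(a))_{\l+\mu}b] + (\g-\b)(-1)^{|a||\th|}[a_\mu(d_\l(b))],
\]
and membership in $\CDer_{(2\a,\b+\g,\b+\g)}(\R)$ says
\[
2\a\, d_\l([a_\mu b]) = (\b+\g)\bigl([(d_\l(a))_{\l+\mu}b] + (-1)^{|a||\th|}[a_\mu(d_\l(b))]\bigr).
\]
The key observation is that the original equation is recovered as the half-sum of these two: adding them gives $2\a\,d_\l([a_\mu b]) = 2\b[(d_\l(a))_{\l+\mu}b] + 2\g(-1)^{|a||\th|}[a_\mu(d_\l(b))]$, which after dividing by $2$ is exactly the $(\a,\b,\g)$-relation. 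This establishes the inclusion $\supseteq$: if $d$ lies in both sets on the right, then it lies in $\CDer_{(\a,\b,\g)}(\R)$.

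For the reverse inclusion $\subseteq$, I would start from the $(\a,\b,\g)$-relation and produce the two target identities by taking suitable linear combinations. The second target (the $(2\a,\b+\g,\b+\g)$-relation) is simply twice the defining relation, so it holds automatically. The first target (the $(0,\b-\g,\g-\b)$-relation) is the more delicate one, and this is where I expect the only genuine subtlety to arise: it is \emph{not} a formal consequence of the single $(\a,\b,\g)$-equation for a general conformal linear map, but becomes one once we use the skew-supersymmetry axiom $(\rm C2)_\lambda$ together with conformal sesquilinearity. The point is that $d_\l([a_\mu b])$, written two ways via the $(\a,\b,\g)$-relation applied to $[a_\mu b]$ and to $[b_{-\pr-\mu}a]$ (i.e. swapping the roles of $a$ and $b$ using skew-supersymmetry), forces the bracket expressions to satisfy the antisymmetric relation displayed above. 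I would therefore carry out this swap carefully, tracking the sign $(-1)^{|a||b|}$ and the substitution of spectral parameters, and check that the resulting identity collapses to $(\b-\g)\bigl([(d_\l(a))_{\l+\mu}b] - (-1)^{|a||\th|}[a_\mu(d_\l(b))]\bigr)=0$. Once both target relations are verified, the two inclusions together give the claimed equality, completing the proof.
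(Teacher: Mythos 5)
Your overall strategy is the correct one and, in fact, coincides with the paper's: the inclusion $\supseteq$ follows by summing the two right-hand relations, and the substantive input for $\subseteq$ is the swap of $a$ and $b$ via skew-supersymmetry and conformal sesquilinearity. However, one step as stated is false: the claim that the $(2\alpha,\beta+\gamma,\beta+\gamma)$-relation ``is simply twice the defining relation, so it holds automatically.'' Writing $X=[(d_\lambda(a))_{\lambda+\mu}b]$ and $Y=(-1)^{|a||\theta|}[a_\mu(d_\lambda(b))]$, twice the defining relation reads $2\alpha\, d_\lambda([a_\mu b])=2\beta X+2\gamma Y$, i.e.\ it is the $(2\alpha,2\beta,2\gamma)$-relation, which by the scaling property (Proposition \ref{prop:4.6}(4)) cuts out exactly the same set $\CDer_{(\alpha,\beta,\gamma)}(\R)$ and yields nothing new. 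The target relation is $2\alpha\, d_\lambda([a_\mu b])=(\beta+\gamma)(X+Y)$, and the two differ by the term $(\beta-\gamma)(X-Y)$, which vanishes for a general conformal linear map only when $\beta=\gamma$. So the second membership is \emph{not} automatic; it is exactly as deep as the first one.

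The repair is already contained in your own plan. The swap argument produces, alongside the defining relation $\alpha\, d_\lambda([a_\mu b])=\beta X+\gamma Y$, the swapped relation $\alpha\, d_\lambda([a_\mu b])=\gamma X+\beta Y$. Subtracting the two gives your first target $(\beta-\gamma)(X-Y)=0$, and \emph{adding} them gives the second target $2\alpha\, d_\lambda([a_\mu b])=(\beta+\gamma)(X+Y)$; equivalently, once the first target is known, add it to the doubled defining relation. This is precisely how the paper proceeds: its equations (\ref{4005}) and (\ref{4006}) are the difference and the sum of the original relation (\ref{4001}) and the swapped relation (\ref{4004}). So keep your argument, but derive the second relation from the swap (or from the first relation plus the defining one) instead of asserting it holds for free.
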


\begin{proof}
For any $d\in\CDer_{(\alpha,\beta,\gamma)}(\R)$ and $a,b\in\R$, we get the following equality:
\begin{equation}\label{4001}
\alpha d_\lambda([a_\mu b])=[(\beta d_\lambda (a))_{\lambda+\mu}b]+(-1)^{|a||d|}[a_\mu(\gamma d_\lambda (b))].
\end{equation}
By the skew symmetry and (\ref{4001}), for any $a,b\in\R$, we obtain
\begin{equation}\label{4002}
(-1)^{|a||b|}\alpha d_\lambda([b_{-\mu-\partial} a])=(-1)^{(|a|+|d|)|b|}[b_{-\lambda-\mu-\partial} (\beta d_\lambda (a))]+(-1)^{|a||d|+(|b|+|d|)|a|}[(\gamma d_\lambda (b))_{-\mu-\partial}a].
\end{equation}
By the conformal sesquilinearity and replacing $-\l-\mu-\partial$ by $\mu^{'}$ in (\ref{4002}), we get
\begin{equation}\label{4003}
\alpha d_\lambda([b_{\mu^{'}} a])=(-1)^{(|b||d|}[b_{\mu^{'}} (\beta d_\lambda (a))]+[(\gamma d_\lambda (b))_{\l+\mu^{'}} a].
\end{equation}
Then by changing the place of $a,b$ and replacing $\mu^{'}$ by $\mu$ in (\ref{4003}), we have
\begin{equation}\label{4004}
\alpha d_\lambda([a_{\mu} b])=(-1)^{(|a||d|}[a_{\mu} (\beta d_\lambda (b))]+[(\gamma d_\lambda (a))_{\l+\mu} b].
\end{equation}
By (\ref{4001}) and (\ref{4004}), we can get
\begin{equation}\label{4005}
0=(\b-\g)([( d_\lambda (a))_{\l+\mu} b]-(-1)^{(|a||d|}[a_{\mu} ( d_\lambda (b))]),
\end{equation}
and
\begin{equation}\label{4006}
2\alpha d_\lambda([a_{\mu} b])=
(\b+\g)([( d_\lambda (a))_{\l+\mu} b]+(-1)^{(|a||d|}[a_{\mu} ( d_\lambda (b))]).
\end{equation}

Therefore, $\CDer_{(\alpha,\beta,\gamma)}(\R)\subseteq \CDer_{(0,\beta-\gamma,\gamma-\beta)}(\R)\cap \CDer_{(2\alpha,\beta+\gamma,\beta+\gamma)}(\R)$.
Similarly, starting with the equation (\ref{4005}) and (\ref{4006}), we can also conclude that (\ref{4001}) holds.

Hence,
$\CDer_{(\alpha,\beta,\gamma)}(\R)=\CDer_{(0,\beta-\gamma,\gamma-\beta)}(\R)\cap \CDer_{(2\alpha,\beta+\gamma,\beta+\gamma)}(\R)$.
\end{proof}

Using similar methods in \cite{FHS2019} and \cite{NH2008}, we have the following theorem.

\begin{thm}\label{theorem:4.8}
For any $\alpha,\beta,\gamma\in \mathbb{C}$, there exists $\delta\in\mathbb{C}$ such that the subspace $\CDer_{(\alpha,\beta,\gamma)}(\R)\subseteq \Cend (\R)$ is equal to one of the following four cases:~\rm{(1)}~$\CDer_{(\delta,0,0)}(\R)$, \rm{(2)}~$\CDer_{(\delta,1,-1)}(\R)$, \rm{(3)}~$\CDer_{(\delta,1,0)}(\R)$, \rm{(4)}~$\CDer_{(\delta,1,1)}(\R)$.
\end{thm}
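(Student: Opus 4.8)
The plan is to classify the triples $(\alpha,\beta,\gamma)$ by leaning entirely on the two structural facts already in hand: the scaling invariance $\CDer_{(\alpha,\beta,\gamma)}(\R)=\CDer_{(\triangle\alpha,\triangle\beta,\triangle\gamma)}(\R)$ for $\triangle\in\mathbb{C}^{*}$ from Proposition \ref{prop:4.6}(4), and the decomposition $\CDer_{(\alpha,\beta,\gamma)}(\R)=\CDer_{(0,\beta-\gamma,\gamma-\beta)}(\R)\cap\CDer_{(2\alpha,\beta+\gamma,\beta+\gamma)}(\R)$ from Proposition \ref{prop:4.7}. The decomposition shows that the subspace is controlled by the two scalars $\beta-\gamma$ and $\beta+\gamma$ (together with $\alpha$), so I would split the argument into four cases according to whether each of $\beta-\gamma$ and $\beta+\gamma$ vanishes. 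These four sign patterns are plainly exhaustive, so it remains only to identify the correct normal form in each.

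First I would dispose of the three cases in which the decomposition collapses to a single factor, each settled by one use of the scaling invariance. If $\beta-\gamma=\beta+\gamma=0$, then $\beta=\gamma=0$ and the subspace is already $\CDer_{(\alpha,0,0)}(\R)$, which is case (1) with $\delta=\alpha$. If $\beta-\gamma=0$ but $\beta+\gamma\neq 0$, then $\beta=\gamma\neq 0$ and scaling by $\triangle=\beta^{-1}$ gives $\CDer_{(\alpha/\beta,1,1)}(\R)$, i.e. case (4) with $\delta=\alpha/\beta$. If $\beta-\gamma\neq 0$ but $\beta+\gamma=0$, then $\gamma=-\beta\neq 0$ and scaling by $\triangle=\beta^{-1}$ gives $\CDer_{(\alpha/\beta,1,-1)}(\R)$, i.e. case (2) with $\delta=\alpha/\beta$. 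In all three situations the relevant denominator is nonzero, so the scaling is legitimate.

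The remaining case, $\beta-\gamma\neq 0$ and $\beta+\gamma\neq 0$, is where I expect the real obstacle, since Proposition \ref{prop:4.7} genuinely produces a two-factor intersection that does not simplify on its own. Applying that decomposition and then scaling the two factors separately, by $(\beta-\gamma)^{-1}$ and $(\beta+\gamma)^{-1}$, presents the subspace as $\CDer_{(0,1,-1)}(\R)\cap\CDer_{(2\alpha/(\beta+\gamma),1,1)}(\R)=\QC(\R)\cap\CDer_{(2\alpha/(\beta+\gamma),1,1)}(\R)$. The key manoeuvre is then to recognize this very intersection as a normal form by running Proposition \ref{prop:4.7} \emph{backwards} on the candidate triple $(\delta,1,0)$: that proposition gives $\CDer_{(\delta,1,0)}(\R)=\CDer_{(0,1,-1)}(\R)\cap\CDer_{(2\delta,1,1)}(\R)$. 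Matching the two intersections, it suffices to choose $\delta$ with $2\delta=2\alpha/(\beta+\gamma)$, that is $\delta=\alpha/(\beta+\gamma)$, placing the subspace in case (3). I would stress that the decisive trick throughout is to apply Proposition \ref{prop:4.7} both to the given triple and to the target normal form and simply compare factors, rather than to wrestle with the defining $\lambda$-identity by hand; the rest is routine bookkeeping over the four cases.
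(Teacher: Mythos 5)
Your proof is correct, and it fills a genuine gap: the paper gives no argument for Theorem \ref{theorem:4.8} at all, deferring instead to the methods of \cite{FHS2019} and \cite{NH2008}. Your route --- splitting into four cases according to the vanishing of $\beta-\gamma$ and $\beta+\gamma$, normalizing with the scaling invariance of Proposition \ref{prop:4.6}(4), and, in the generic case $\beta\neq\pm\gamma$, applying Proposition \ref{prop:4.7} both to $(\alpha,\beta,\gamma)$ and, backwards, to the target triple $(\delta,1,0)$ so that the two intersections $\CDer_{(0,1,-1)}(\R)\cap\CDer_{(2\alpha/(\beta+\gamma),1,1)}(\R)$ and $\CDer_{(0,1,-1)}(\R)\cap\CDer_{(2\delta,1,1)}(\R)$ match factor by factor with $\delta=\alpha/(\beta+\gamma)$ --- is essentially the classification scheme of those references, but has the merit of being entirely self-contained within the paper's own results. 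Your backward use of Proposition \ref{prop:4.7} on $(\delta,1,0)$ is exactly the identity the paper later records as item (vi) of Theorem \ref{theorem:4.9}, so your argument is consistent with, and in effect also proves, that item. The case bookkeeping is exhaustive ($\beta=\gamma=0$; $\beta=\gamma\neq0$; $\beta=-\gamma\neq0$; $\beta\neq\pm\gamma$), in each branch the scalar you invert is nonzero, and the assignment of the four branches to the four normal forms $(\delta,0,0)$, $(\delta,1,1)$, $(\delta,1,-1)$, $(\delta,1,0)$ is the correct one.
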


Similarly, we can get the main result of this section.

\begin{thm}\label{theorem:4.9}
For any $\alpha,\beta,\gamma\in \mathbb{C}$, then $\CDer_{(\alpha,\beta,\gamma)}(\R)$ is equal to one of the following subspaces of $\Cend(\R)$:
\begin{itemize}\parskip-3pt
\item[\rm(i)]$\CDer_{(0,0,0)}(\R)=\Cend(\R)$.
\item[\rm(ii)]$\CDer_{(1,0,0)}(\R)=\{d_\lambda\in \Cend(\R)  \,~|~\,  d_\lambda([R,R])=0\}$.
\item[\rm(iii)]$\CDer_{(0,1,-1)}(\R)=\QC(\R)$.
\item[\rm(iv)]$\CDer_{(\delta,1,-1)}(\R)=\CDer_{(0,1,-1)}(\R)\cap \CDer_{(1,0,0)}(\R)$.
\item[\rm(v)]$\CDer_{(\delta,1,1)}(\R)$, $\delta\in\mathbb{C}$.
\item[\rm(vi)]$\CDer_{(\delta,1,0)}(\R)=\CDer_{(0,1,-1)}(\R)\cap \CDer_{(2\delta,1,1)}(\R)$.
\end{itemize}
\end{thm}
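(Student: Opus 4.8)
The plan is to take Theorem \ref{theorem:4.8} as the starting point, since it already collapses every $\CDer_{(\alpha,\beta,\gamma)}(\R)$ into one of the four normal forms $\CDer_{(\delta,0,0)}(\R)$, $\CDer_{(\delta,1,-1)}(\R)$, $\CDer_{(\delta,1,0)}(\R)$, $\CDer_{(\delta,1,1)}(\R)$, and then to split each normal form further according to whether $\delta$ vanishes, using the scaling invariance from Proposition \ref{prop:4.6}(4) and the intersection decomposition from Proposition \ref{prop:4.7}. I would organize the argument as four cases, one per normal form.

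First, for the form $\CDer_{(\delta,0,0)}(\R)$ I would split on $\delta$. If $\delta=0$, the defining relation $\alpha d_\lambda([a_\mu b])=[(\beta d_\lambda(a))_{\lambda+\mu}b]+(-1)^{|a||\th|}[a_\mu(\gamma d_\lambda(b))]$ degenerates to $0=0$, so every conformal linear map qualifies and we land in case (i), $\CDer_{(0,0,0)}(\R)=\Cend(\R)$. If $\delta\neq0$, rescaling by $\triangle=\delta^{-1}$ in Proposition \ref{prop:4.6}(4) gives $\CDer_{(\delta,0,0)}(\R)=\CDer_{(1,0,0)}(\R)$, whose defining relation reads $d_\lambda([a_\mu b])=0$ for all $a,b$; this is exactly case (ii). Second, for $\CDer_{(\delta,1,-1)}(\R)$ I would again split on $\delta$. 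When $\delta=0$ this is $\CDer_{(0,1,-1)}(\R)=\QC(\R)$ by Proposition \ref{prop:4.6}(2), giving case (iii). When $\delta\neq0$, I would feed $(\alpha,\beta,\gamma)=(\delta,1,-1)$ into Proposition \ref{prop:4.7} to obtain $\CDer_{(\delta,1,-1)}(\R)=\CDer_{(0,2,-2)}(\R)\cap\CDer_{(2\delta,0,0)}(\R)$, and then rescale each factor (by $\triangle=\tfrac12$ on the first and $\triangle=(2\delta)^{-1}$ on the second, both legitimate since the relevant scalars are nonzero) to rewrite this as $\CDer_{(0,1,-1)}(\R)\cap\CDer_{(1,0,0)}(\R)$, which is case (iv).

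For the remaining two normal forms I would simply apply Proposition \ref{prop:4.7} without any case split: the form $\CDer_{(\delta,1,0)}(\R)$ decomposes as $\CDer_{(0,1,-1)}(\R)\cap\CDer_{(2\delta,1,1)}(\R)$, furnishing case (vi) for all $\delta$, while $\CDer_{(\delta,1,1)}(\R)$ is left untouched and supplies case (v). Collecting the outcomes, the four normal forms of Theorem \ref{theorem:4.8} distribute exactly onto the six subspaces (i)--(vi), which proves exhaustiveness.

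I do not expect any single step to be a genuine obstacle, since the heavy lifting is carried entirely by Theorem \ref{theorem:4.8} and Propositions \ref{prop:4.6}--\ref{prop:4.7}; the only point demanding care is the bookkeeping around the scaling invariance. Because Proposition \ref{prop:4.6}(4) permits rescaling only by nonzero scalars, I must treat the vanishing and non-vanishing values of $\delta$ separately and cannot collapse $\CDer_{(\delta,0,0)}(\R)$ or $\CDer_{(2\delta,0,0)}(\R)$ to the normalized parameters when $\delta=0$. Keeping this dichotomy explicit in the first two cases is precisely what guarantees that the degenerate subspaces $\Cend(\R)$ and $\QC(\R)$ are singled out as (i) and (iii) rather than being spuriously merged into (ii) and (iv), and hence that the listed subspaces are genuinely exhaustive.
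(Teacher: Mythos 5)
Your proposal is correct and follows essentially the route the paper intends: the paper gives no explicit proof of Theorem \ref{theorem:4.9} (it only remarks that the result follows ``similarly''), and the intended argument is precisely the one you wrote out, namely reducing to the four normal forms of Theorem \ref{theorem:4.8} and then refining each via the scaling invariance of Proposition \ref{prop:4.6}(4) and the decomposition of Proposition \ref{prop:4.7}, with the $\delta=0$ versus $\delta\neq 0$ dichotomy handled exactly as you do. Your computation $\CDer_{(\delta,1,-1)}(\R)=\CDer_{(0,2,-2)}(\R)\cap\CDer_{(2\delta,0,0)}(\R)=\CDer_{(0,1,-1)}(\R)\cap\CDer_{(1,0,0)}(\R)$ for $\delta\neq0$ is the same one the paper itself performs (inside the proof of Proposition \ref{prop:4.12}), so your write-up simply supplies the details the paper omitted.
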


We turn now to the problem of relation between conformal $(\si,\t)$-derivation and conformal $(\a,\b, \g)$-derivations.

\begin{lem}\label{lemma:4.10}
For any $\alpha,\beta,\gamma\in \mathbb{C}$, we have
$\CDer_{(\alpha,\beta,\gamma)}(\R)
=\CDer_{(\frac{\a}{\b+\l},1,0)}(\R)$.
\end{lem}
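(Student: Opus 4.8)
The plan is to reduce both sides to the \emph{same} intersection of two normalized derivation spaces, using the decomposition of Proposition \ref{prop:4.7} together with the scaling invariance of Proposition \ref{prop:4.6}(4). Tracing the mechanism shows the coefficient that actually emerges is $\frac{\a}{\b+\g}$, so throughout I work in the generic regime $\b\neq\g$ and $\b+\g\neq0$, under which that coefficient is well defined; the degenerate parameter values must be handled, or excluded, separately.

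First I would apply Proposition \ref{prop:4.7} to the triple $(\a,\b,\g)$, giving
\[\CDer_{(\a,\b,\g)}(\R)=\CDer_{(0,\b-\g,\g-\b)}(\R)\cap\CDer_{(2\a,\b+\g,\b+\g)}(\R).\]
Next I would normalize each factor by Proposition \ref{prop:4.6}(4): scaling the first triple by $\triangle=(\b-\g)^{-1}$ turns it into $(0,1,-1)$, and scaling the second by $\triangle=(\b+\g)^{-1}$ turns it into $(\tfrac{2\a}{\b+\g},1,1)$, so that
\[\CDer_{(\a,\b,\g)}(\R)=\CDer_{(0,1,-1)}(\R)\cap\CDer_{(\frac{2\a}{\b+\g},1,1)}(\R).\]
Applying Proposition \ref{prop:4.7} directly to the triple $(\tfrac{\a}{\b+\g},1,0)$ (for which $\b-\g=1$, $\b+\g=1$ and $2\a=\tfrac{2\a}{\b+\g}$) yields exactly the same intersection
\[\CDer_{(\frac{\a}{\b+\g},1,0)}(\R)=\CDer_{(0,1,-1)}(\R)\cap\CDer_{(\frac{2\a}{\b+\g},1,1)}(\R),\]
and comparing the two displays gives the asserted equality.

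As a cross-check I would also record the hands-on version coming straight from the proof of Proposition \ref{prop:4.7}. In the regime $\b\neq\g$, equation \eqref{4005} forces every $d\in\CDer_{(\a,\b,\g)}(\R)$ to be a quasicentroid, i.e. $[(d_\l(a))_{\l+\mu}b]=(-1)^{|a||d|}[a_\mu(d_\l(b))]$; substituting this into \eqref{4006} collapses it to $\a d_\l([a_\mu b])=(\b+\g)[(d_\l(a))_{\l+\mu}b]$, which is precisely the defining identity of $\CDer_{(\frac{\a}{\b+\g},1,0)}(\R)$. The converse is the same computation run backwards, using the quasicentroid property that is built into any $(\,\cdot\,,1,0)$-triple through Proposition \ref{prop:4.7} and the identification $\CDer_{(0,1,-1)}(\R)=\QC(\R)$ of Proposition \ref{prop:4.6}(2).

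The routine content — the sign bookkeeping in the skew-supersymmetry step and the substitution $\mu'\mapsto-\l-\mu-\pr$ — is already absorbed into Proposition \ref{prop:4.7}, so the only genuine obstacle is the boundary behaviour of the parameters. When $\b=\g$ the first factor degenerates to $\CDer_{(0,0,0)}(\R)=\Cend(\R)$ while the right-hand side retains the quasicentroid factor $\CDer_{(0,1,-1)}(\R)$, so the two sides need not agree; likewise $\b+\g=0$ renders the coefficient meaningless. I therefore expect the precise statement to carry the hypotheses $\b\neq\g$ and $\b+\g\neq0$, and the main work in a fully rigorous proof is to make these genericity assumptions explicit and to verify that the scalings by $(\b-\g)^{-1}$ and $(\b+\g)^{-1}$ are the only places where nonvanishing is used.
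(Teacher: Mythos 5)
Your proof is correct and follows essentially the same route as the paper's own: decompose $\CDer_{(\a,\b,\g)}(\R)$ by Proposition \ref{prop:4.7}, normalize the two factors by the scaling invariance of Proposition \ref{prop:4.6}(4), and recognize the resulting intersection $\CDer_{(0,1,-1)}(\R)\cap\CDer_{(\frac{2\a}{\b+\g},1,1)}(\R)$ as Proposition \ref{prop:4.7} applied to the triple $(\frac{\a}{\b+\g},1,0)$. Your two side remarks are in fact improvements on the paper: the $\l$ in the statement is indeed a typo for $\g$, and the hypotheses $\b\neq\g$ and $\b+\g\neq0$ are genuinely needed, since the paper silently divides by both, and when $\b=\g$ the left-hand side (e.g.\ $\CDer_{(1,1,1)}(\R)=\CDer(\R)$) carries no quasicentroid constraint while the right-hand side equals $\QC(\R)\cap\CDer(\R)$, so the stated ``for any $\a,\b,\g$'' is too strong.
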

\begin{proof}
According to Proposition \ref{prop:4.7}, we have
\begin{align*}
&~~~~\CDer_{(\alpha,\beta,\gamma)}(\R)\\
&=\CDer_{(0,\beta-\gamma,\gamma-\beta)}(\R)\cap \CDer_{(2\alpha,\beta+\gamma,\beta+\gamma)}(\R)\\
&=\CDer_{(0,1,-1)}(\R)\cap \CDer_{(\frac{2\alpha}{\beta+\gamma},1,1)}(\R)\\
&=\CDer_{(\frac{\a}{\b+\l},1,0)}(\R).
\end{align*}
This lemma is proved.
\end{proof}

\begin{prop}\label{prop:4.11}
Suppose that $\si\in G$ and $\a\in\mathbb{C}$ such that $(\si-\a\rm{id}_{\R})(\R)\subseteq \rm{Z}(\R)$. If $\a\neq1$, then $\CDer_{\si}(\R)=\CDer_{(\frac{1}{\a+1},1,0)}(\R)$;
if $\a=1$, then $\CDer_{\si}(\R)=\CDer_{(1,1,1)}(\R)=\CDer(\R)$.
\end{prop}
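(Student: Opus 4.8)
The plan is to identify $\CDer_{\si}(\R)$ with a space of conformal $(\a,\b,\g)$-derivations and then invoke Lemma \ref{lemma:4.10}. First I would take an arbitrary $d\in\CDer_{\si}(\R)$ and rewrite its defining identity using the hypothesis $(\si-\a\,\id_{\R})(\R)\subseteq\Z(\R)$. Writing $\si(b)=\a b+z_b$ with $z_b=(\si-\a\,\id_{\R})(b)\in\Z(\R)$, I would split
\[
[(d_\l(a))_{\l+\mu}(\si(b))]=\a[(d_\l(a))_{\l+\mu}b]+[(d_\l(a))_{\l+\mu}z_b].
\]
The key observation is that the last term vanishes: although the center is defined by the left condition $[a_\l b]=0$ for central $a$, skew-supersymmetry forces $[c_\nu z]=0$ as well whenever $z\in\Z(\R)$, so central elements are annihilated on both sides of the bracket (this is the same two-sided property already used tacitly in Propositions \ref{prop:2.3} and \ref{prop:4.4}). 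Hence the defining identity of $\CDer_{\si}(\R)$ collapses to
\[
d_\l([a_\mu b])=[(\a\,d_\l(a))_{\l+\mu}b]+(-1)^{|a||d|}[a_\mu(d_\l(b))],
\]
which is exactly the condition for $d\in\CDer_{(1,\a,1)}(\R)$ (checked on homogeneous $d$ of degree $\th=|d|$, so that the sign $(-1)^{|a||d|}$ matches the $(-1)^{|a||\th|}$ in the definition, and then extended degreewise). Since every step is reversible, I would conclude $\CDer_{\si}(\R)=\CDer_{(1,\a,1)}(\R)$, valid for every $\a$.

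Next I would feed this identification into the reduction theory already established. For $\a\neq1$ I would apply Lemma \ref{lemma:4.10} with $(\a,\b,\g)=(1,\a,1)$, noting that $\b-\g=\a-1\neq0$ makes the intermediate normalisation $\CDer_{(0,\b-\g,\g-\b)}=\CDer_{(0,1,-1)}$ legitimate and that $\b+\g=\a+1$; this gives $\CDer_{(1,\a,1)}(\R)=\CDer_{(\frac{1}{\a+1},1,0)}(\R)$ and hence $\CDer_{\si}(\R)=\CDer_{(\frac{1}{\a+1},1,0)}(\R)$. For $\a=1$ the hypothesis reads $(\si-\id_{\R})(\R)\subseteq\Z(\R)$, so the first paragraph already yields $\CDer_{\si}(\R)=\CDer_{(1,1,1)}(\R)$, and Proposition \ref{prop:4.6}(1) identifies $\CDer_{(1,1,1)}(\R)=\CDer(\R)$; as a cross-check one may instead quote the ``in particular'' clause of Proposition \ref{prop:2.3} directly.

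The main obstacle is the first paragraph: the reduction of the $\si$-twisted identity to the untwisted $(1,\a,1)$ form rests entirely on the two-sided annihilation property of the center, which must be derived from skew-supersymmetry rather than read off the one-sided definition. The case split $\a\neq1$ versus $\a=1$ is then forced by the hypotheses of Lemma \ref{lemma:4.10}, whose derivation needs $\b\neq\g$ (i.e.\ $\a\neq1$) to normalise the first factor to $(0,1,-1)$, and also $\b+\g\neq0$ (i.e.\ $\a\neq-1$) for the expression $\frac{1}{\a+1}$ to be defined; when $\a=1$ this normalisation degenerates and one must recognise the space directly as $\CDer(\R)$ via Proposition \ref{prop:4.6}(1).
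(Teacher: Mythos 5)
Your proposal is correct and follows essentially the same route as the paper's own proof: both use the hypothesis $(\si-\a\,\id_{\R})(\R)\subseteq\Z(\R)$ to identify $\CDer_{\si}(\R)=\CDer_{(1,\a,1)}(\R)$, then invoke Lemma \ref{lemma:4.10} when $\a\neq1$ and identify $\CDer_{(1,1,1)}(\R)=\CDer(\R)$ when $\a=1$. Your extra care---deriving the two-sided annihilation by central elements from skew-supersymmetry, and flagging that Lemma \ref{lemma:4.10} implicitly needs $\b\neq\g$ and $\b+\g\neq0$ (i.e.\ $\a\neq-1$)---only makes explicit what the paper leaves tacit.
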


\begin{proof}
Assume that $d\in\CDer_{\si}(\R)$.
Since $(\si-\a\rm{id}_{\R})(\R)\subseteq \rm{Z}(\R)$, we have
$$[(d_{\l}(a))_{\mu}(\si(b))]=[(d_{\l}(a))_{\mu}(\a b)],\quad \forall a,b\in\R.$$
Notice that
$$d\in\CDer_{\si}(\R)\Leftrightarrow d_{\l}([(a_{\mu}b])=[(d_{\l}(a))_{\mu}(\a b)]+(-1)^{|a||d|}[a_{\mu}(d_{\l}(b))]\Leftrightarrow d\in\CDer_{(1,\a,1)}(\R),$$
which implies that $\CDer_{\si}(\R)=\CDer_{(1,\a,1)}(\R)$. If $\a\neq1$, then $\CDer_{(1,\a,1)}(\R)=\CDer_{(\frac{1}{\a+1},1,0)}(\R)$ by Lemma \ref{lemma:4.10}. Therefore, $\CDer_{\si}(\R)=\CDer_{(\frac{1}{\a+1},1,0)}(\R)$.
If $\a=1$, then $\CDer_{\si}(\R)=\CDer_{(1,1,1)}(\R)=\CDer(\R)$.
\end{proof}

\begin{prop}\label{prop:4.12}
If $\d\neq 0$, then
$\CDer_{(\d,1,-1)}(\R)
=\CDer_{\rm{id}_\R,-\rm{id}_\R}(\R)$ and $\CDer_{(\d,1,1)}(\R)
=\CDer_{\frac{1}{\d}\rm{id}_\R,\frac{1}{\d}\rm{id}_\R}(\R)$.
\end{prop}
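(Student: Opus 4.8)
The plan is to prove both equalities by unfolding the defining relation of the relevant conformal $(\si,\t)$-derivation for the prescribed scalar maps and comparing it, via the $\mathbb{C}$-bilinearity of the $\l$-bracket, with the defining relation of the matching conformal $(\a,\b,\g)$-derivation. Throughout I write $\th=|d|$ and interpret $\CDer_{\si,\t}(\R)$ through its defining identity, with the scalar maps $\frac{1}{\d}\mathrm{id}_{\R}$ and $-\mathrm{id}_{\R}$ entering as the $\mathbb{C}[\partial]$-linear maps $\si,\t$.

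I would handle the second identity first, as it is a direct comparison. For $d\in\Cend(\R)_{\th}$, unfolding membership in $\CDer_{\frac{1}{\d}\mathrm{id}_{\R},\frac{1}{\d}\mathrm{id}_{\R}}(\R)$ and pulling the scalar $\frac{1}{\d}$ out of each bracket gives
\[d_\l([a_\mu b])=\frac{1}{\d}[(d_\l(a))_{\l+\mu}b]+\frac{1}{\d}(-1)^{|a||d|}[a_\mu(d_\l(b))].\]
Multiplying through by $\d$ (legitimate and reversible precisely because $\d\neq0$) yields $\d\,d_\l([a_\mu b])=[(d_\l(a))_{\l+\mu}b]+(-1)^{|a||d|}[a_\mu(d_\l(b))]$, which is exactly the defining relation of $\CDer_{(\d,1,1)}(\R)$. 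Since every step is an equivalence, $\CDer_{(\d,1,1)}(\R)=\CDer_{\frac{1}{\d}\mathrm{id}_{\R},\frac{1}{\d}\mathrm{id}_{\R}}(\R)$.

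For the first identity the same unfolding with $\si=\mathrm{id}_{\R}$ and $\t=-\mathrm{id}_{\R}$ produces $d_\l([a_\mu b])=[(d_\l(a))_{\l+\mu}b]-(-1)^{|a||d|}[a_\mu(d_\l(b))]$, which is precisely the defining relation of $\CDer_{(1,1,-1)}(\R)$; hence $\CDer_{\mathrm{id}_{\R},-\mathrm{id}_{\R}}(\R)=\CDer_{(1,1,-1)}(\R)$. It then remains only to show $\CDer_{(\d,1,-1)}(\R)=\CDer_{(1,1,-1)}(\R)$ for $\d\neq0$. Here I would invoke Proposition \ref{prop:4.7}: applied to $(\d,1,-1)$ and to $(1,1,-1)$ it rewrites them as $\CDer_{(0,2,-2)}(\R)\cap\CDer_{(2\d,0,0)}(\R)$ and $\CDer_{(0,2,-2)}(\R)\cap\CDer_{(2,0,0)}(\R)$ respectively; normalizing the scalar triples by Proposition \ref{prop:4.6}(4) (dividing by $2\d$, which is permitted exactly because $\d\neq0$) collapses both intersections to the single set $\CDer_{(0,1,-1)}(\R)\cap\CDer_{(1,0,0)}(\R)$, so the two sides coincide.

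The only genuine subtlety — and the main obstacle — is in the first identity: the leading coefficient $\d$ of $\CDer_{(\d,1,-1)}(\R)$ does not match the coefficient $1$ arising from $\CDer_{\mathrm{id}_{\R},-\mathrm{id}_{\R}}(\R)$, so a naive comparison of equations fails. The resolution is that membership in $\CDer_{(\d,1,-1)}(\R)$ forces the factor $\CDer_{(1,0,0)}(\R)=\{d\mid d_\l([\R_\mu\R])=0\}$, so the term $\d\,d_\l([a_\mu b])$ vanishes identically and $\d$ becomes irrelevant; this is exactly what Proposition \ref{prop:4.7} together with Proposition \ref{prop:4.6}(4) encodes, and it is precisely the place where the hypothesis $\d\neq0$ is used.
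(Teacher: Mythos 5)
Your proof is correct and follows essentially the same route as the paper: the paper also identifies $\CDer_{(\d,1,-1)}(\R)$ with $\CDer_{(1,1,-1)}(\R)$ via Proposition \ref{prop:4.7} together with the scaling property, then matches $\CDer_{(1,1,-1)}(\R)$ with $\CDer_{\rm{id}_\R,-\rm{id}_\R}(\R)$ by unfolding definitions, and disposes of the $(\d,1,1)$ case by the same multiply-by-$\d$ observation you make (the paper merely says ``similarly''). Your version is in fact slightly cleaner, since the paper's displayed computation carries a typo (it labels the decomposition of $\CDer_{(\d,1,-1)}(\R)$ as $\CDer_{(\d,1,1)}(\R)$), whereas your normalization to the common intersection $\CDer_{(0,1,-1)}(\R)\cap\CDer_{(1,0,0)}(\R)$ makes the role of $\d\neq 0$ explicit.
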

\begin{proof}
According to Proposition \ref{prop:4.7}, we have
\begin{align*}
&~~~~\CDer_{(\d,1,1)}(\R)\\
&=\CDer_{(0,2,-2)}(\R)\cap \CDer_{(2\d,0,0)}(\R)\\
&=\CDer_{(0,2,-2)}(\R)\cap \CDer_{(2,0,0)}(\R)\\
&=\CDer_{(1,1,-1)}(\R).
\end{align*}
For any $d\in\CDer_{(1,1,-1)}(\R)$, we have
$$d_\l([a_\mu b])=[(d_\l(a))_{\l+\mu}b]-(-1)^{|a||d|}[a_\mu (d_\l(b))],$$
which means $d$ is a $(\si,\t)$-derivation with $\si=\rm{id}_{\R}$ and $\t=-\rm{id}_{\R}$.
Therefore, $\CDer_{(\d,1,-1)}(\R)
=\CDer_{\rm{id}_\R,-\rm{id}_\R}(\R)$.

Similarly, $\CDer_{(\d,1,1)}(\R)$ is a $(\si,\t)$-derivation with $\si=\frac{1}{\d}\rm{id}_{\R}$ and $\t=\frac{1}{\d}\rm{id}_{\R}$.
\end{proof}

\renewcommand{\refname}{References}


\begin{thebibliography}{99}
\bibitem{CHL2020}
H. Chang, A generalization on derivations of Lie algebras. Electronic Research Archive, 2021, doi: 10.3934/era.2020124.
\bibitem{DK1998}
A. D'Andrea, V. Kac, Structure theory of finite conformal algebras. \emph{Selecta Math. (N.S.)} 4 (1998), no. 3, 377-418.
\bibitem{FKR2004}
D. Fattori, V. Kac, A. Retakh, Structure theory of finite Lie conformal superalgebras. \emph{Lie theory and its applications in physics V}, 27-63, World Sci. Publ., River Edge, NJ, 2004.
\bibitem{FHS2019}
G. Fan, Y. Hong, Y. Su, Generalized conformal derivations of Lie conformal algebras. \emph{J. Algebra Appl}, 18 (2019), no. 9, 1950175, 16 pp.
\bibitem{Kac98}
V. Kac, Vertex algebras for beginners. University Lecture Series, 10. American Mathematical Society, Providence, RI, 1997.
\bibitem{Kac97}
 V. Kac, The idea of locality. In: H.-D. Doebner, et al. (Eds.), Physical Applications and Mathematical Aspects of Geometry, Groups and Algebras, World Sci. Publ., Singapore, (1997) 16-32.
\bibitem{Kac99}
V. Kac, Formal distribution algebras and conformal algebras. \emph{XIIth International Congress of Mathematical Physics (ICMP '97) (Brisbane),} 80-97, \emph{Int. Press, Cambridge, MA,} 1999.
\bibitem{NH2008}
P. Novotny, J. Hrivnak, On $(\a,\b,\g)$-derivations of Lie algebras and corresponding invariant functions. \emph{J. Geom. Phys.} 58 (2008), no. 2, 208-217.
\bibitem{ZCY2017}
J. Zhao, L. Chen,  L. Yuan,   Deformations and generalized derivations of Lie conformal superalgebras. \emph{J. Math. Phys.} 58 (2017), no. 11, 111702, 17 pp.
\bibitem{ZYC2018}
J. Zhao, L. Yuan, L. Chen,   Deformations and generalized derivations of Hom-Lie conformal algebras. \emph{Sci. China Math.} 61 (2018), no. 5, 797-812.
\bibitem{ZZ2013}
K. Zheng, Y. Zhang, On $(\a,\b,\g)$-derivations of Lie superalgebras. \emph{Int. J. Geom. Methods Mod Phys.} 10 (2013), no. 10, 1350050, 18 pp.



\end{thebibliography}
\end{document}